\newtheorem{definition}{Definition}
\newtheorem{theorem}{Theorem}
\newtheorem{lemma}{Lemma}
\newtheorem{hyp}{Hypothesis}
\theoremstyle{remark}
\newtheorem{remark}{Remark}
\def\blfootnote{\xdef\@thefnmark{}\@footnotetext}\makeatother
\begin{document}
\setlength{\parindent}{0pt}
\title{Ergodic properties of $\boldsymbol \beta$-adic Halton sequences}
\author{Markus Hofer\footnote{Johannes Kepler University,
Institute of Financial Mathematics, Altenbergerstrasse 69, 4040 Linz, Austria. \mbox{e-mail}: \texttt{markus.hofer@tugraz.at}. The author is supported by the Austrian Science Fund (FWF), Project P21943}
\quad Maria Rita Iac\`o \footnote{Graz University of Technology, Institute of Mathematics A, Steyrergasse 30, 8010 Graz, Austria and University of Calabria, Department of Mathematics and Computer Science, Via P. Bucci 30B, 87036 Arcavacata di Rende (CS), Italy. \mbox{e-mail}: \texttt{iaco@math.tugraz.at}. The author is partially supported by the Austrian Science Fund (FWF): W1230, Doctoral Program ``Discrete Mathematics''.}
\quad Robert Tichy \footnote{Graz University of Technology,
Institute of Mathematics A, Steyrergasse 30, 8010 Graz, Austria. \mbox{e-mail}: \texttt{tichy@tugraz.at}.}}
\date{}
\maketitle

\blfootnote{{\bf Mathematics Subject Classification: 11J71, 11A67, 37A05} }
\blfootnote{{\bf Keywords: uniform distribution, ergodic theory, low discrepancy sequences, dynamical systems, numeration systems} }

\begin{abstract}
 We investigate a parametric extension of the classical $s$-dimensional Halton sequence, where the bases are special Pisot numbers. In a one-dimensional setting the properties of such sequences have already been investigated by several authors \cite{bg,carbone,ninomiya,steiner2}. We use methods from ergodic theory to in order to investigate the distribution behavior of multidimensional versions of such sequences. As a consequence it is shown that the Kakutani-Fibonacci transformation is uniquely ergodic.
\end{abstract}

\section{Introduction}

In this article we consider the distribution properties of deterministic point sequences in $[0,1)^s$. We use the following notation: for two points $\mathbf{a},\mathbf{b} \in [0,1)^s$  we write $\mathbf{a} \leq \mathbf{b}$ and $\mathbf{a} < \mathbf{b}$ if the corresponding inequalities hold in each coordinate; furthermore, we write $[\mathbf{a}, \mathbf{b})$ for the set $\{\mathbf{x} \in [0,1)^s: ~\mathbf{a} \leq \mathbf{x} < \mathbf{b}\}$, and we call such a set an $s$-dimensional interval. Moreover we denote by $\mathbf{1}_{I}$ the indicator function of the set $I \subseteq [0,1)^s$ and by $\lambda_s$ the $s$-dimensional Lebesgue measure, for short we write $\lambda$ instead of $\lambda_1$. Note that vectors will be written in bold fonts and we write $\mathbf{0}$ for the $s$-dimensional vector $(0,\dots,0)$.\\

A sequence $(x_n)_{n \in \mathbb{N}}$ of points in $[0,1)^s$ is called uniformly distributed modulo 1 (u.d.) if
\begin{equation*}
 \lim_{N \rightarrow \infty} \frac{\sum_{n = 1}^N \mathbf{1}_{[\mathbf{a}, \mathbf{b})} (x_n)}{N} = \lambda_s([\mathbf{a},\mathbf{b}))
\end{equation*}
for all $s$-dimensional intervals $[\mathbf{a}, \mathbf{b} ) \subseteq [0,1)^s$. A further characterization of uniform distribution is due to Weyl~\cite{weyl}: a sequence $(x_n)_{n \in \mathbb{N}}$ of points in $[0,1)^s$ is u.d.\ if and only if for every continuous function $f$ on $[0,1)^s$ the relation
\begin{equation*}
  \lim_{N \rightarrow \infty} \frac{\sum_{n = 1}^N f(\mathbf{x}_n)}{N} = \int_{[0,1)^s} f(\mathbf{x}) d\mathbf{x}
\end{equation*}
holds. Weyl's criterion suggests a numerical integration technique which is usually called Quasi Monte Carlo (QMC) integration. However, the theorem gives no information on the quality of the estimator.\\

The Koksma--Hlawka inequality~\cite{hlawka} states that the integration error of QMC integration can be bounded by the product of the variation of $f$ (in the sense of Hardy and Krause), denoted by $V(f)$, and the so-called star-discrepancy $D_N^*$ of the point sequence $(\mathbf{x}_n)_{n \in \mathbb{N}}$, i.e.\
\begin{equation*}
 \left| \frac{1}{N} \sum_{n = 1}^N f(\mathbf{x}_n) - \int_{[0,1]^s} f(\mathbf{x}) d\mathbf{x} \right| \leq V(f) D_N^*(\mathbf{x}_n),
\end{equation*}
where $D_N^*$ is defined by
\begin{equation*}
 D_N^* = D_N^*(\mathbf{x}_1, \ldots, \mathbf{x}_N) = \sup_{\mathbf{a} \in [0,1)^s} \left| \frac{\sum_{n=1}^N \mathbf{1}_{(\mathbf{0},\mathbf{a})} (\mathbf{x}_n)}{N} - \lambda_s([\mathbf{0},\mathbf{a}))\right|.
\end{equation*}
In order to minimize the integration error we have to use point sequences with small discrepancy. There are several constructions for sequences which achieve a star-discrepancy of order $\mathcal{O}(N^{-1} (\log N)^s)$, so-called low-discrepancy sequences. Note that this convergence rate, which is best possible among known sequences, is for all $s \geq 1$ better than that of the probabilistic error of the standard Monte Carlo method, where a sequence of random instead of deterministic points is used. QMC integration is successfully applied in several different areas of applied mathematics, for example in actuarial or financial mathematics, where high-dimensional numerical integrals appear frequently, see e.g.\ \cite{asal, okt}.\\ 

In this article we will construct point sequences by a combination of methods from uniform distribution theory and dynamical systems.
\begin{definition}
Let $(X,\mathcal{A},\mu)$ be a probability space. A  measurable transformation $T:X\rightarrow X$ is called ergodic if for every $A\in\mathcal{A}$ such that 
$T^{-1}(A) = A$, either $\mu(A)=0$ or $ \mu(A)=1$.
\end{definition}

The system $(X,\mathcal{A},\mu, T)$ is called a measure theoretical dynamical system, or dynamical system, for short.
 If $T$ is ergodic, the system is called ergodic.\\

The link between dynamical systems and uniform distribution is given by the following classical result of Birkhoff.

\begin{theorem}[Pointwise Ergodic Theorem]\label{B}
Let $(X,\mathcal{A},\mu, T)$ be a dynamical system. Then, for every $f \in \mathcal{L}^1(X)$ $$\lim_{N\to\infty}\frac{1}{N}\sum_{j=0}^{N-1}f(T^{j}x)$$ 
exists for $\mu$-almost every $x\in X$ (here $T^0x=x$). Furthermore if $T:X\rightarrow X$ is ergodic, then for every $f\in \mathcal{L}^{1}(X)$ we have $$\lim_{N\to\infty}\frac{1}{N}\sum_{j=0}^{N-1}f(T^{j}x)=\int_Xf(x)d\mu(x)\ ,$$ for $\mu$-almost every $x\in X$.
\end{theorem}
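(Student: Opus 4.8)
The plan is to deduce the theorem from the \emph{Maximal Ergodic Theorem}, following the classical route. Write $A_N f(x) = \frac{1}{N}\sum_{j=0}^{N-1} f(T^j x)$ for the ergodic averages and put $\bar f = \limsup_{N\to\infty} A_N f$ and $\underline f = \liminf_{N\to\infty} A_N f$. Since $A_{N}f(Tx)$ and $A_N f(x)$ differ by $(f(T^N x)-f(x))/N \to 0$, both $\bar f$ and $\underline f$ are $T$-invariant and measurable. Establishing the a.e.\ existence of the limit is then equivalent to proving $\bar f = \underline f$ $\mu$-almost everywhere.

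The technical heart is the following maximal inequality: if $g\in\mathcal{L}^1(X)$ and $E=\{x\in X:\ \sup_{N\geq 1}\sum_{j=0}^{N-1} g(T^j x)>0\}$, then $\int_E g\,d\mu\geq 0$. I would prove this by Garsia's telescoping argument: with $S_k=\sum_{j=0}^{k-1} g\circ T^j$ (and $S_0=0$) and $M_N=\max_{0\le k\le N} S_k$, one checks that on $\{M_N>0\}$ the inequality $g\geq M_N-M_N\circ T$ holds, whence, using $M_N\ge 0$ and the $T$-invariance of $\mu$, $\int_{\{M_N>0\}} g\,d\mu\ge\int_X M_N\,d\mu-\int_X M_N\circ T\,d\mu=0$; letting $N\to\infty$ and applying dominated convergence (the integrand being dominated by $|g|$) gives the claim. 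This maximal inequality is the step I expect to be the main obstacle, both to state in exactly the right form and to prove cleanly via the telescoping trick.

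Granting the maximal inequality, for rationals $\alpha<\beta$ consider the $T$-invariant set $D_{\alpha,\beta}=\{\underline f<\alpha<\beta<\bar f\}$. Restricting the dynamics to $D_{\alpha,\beta}$ and applying the maximal inequality to $(f-\beta)\mathbf{1}_{D_{\alpha,\beta}}$ (here $\bar f>\beta$ forces the relevant supremum to be positive on $D_{\alpha,\beta}$) yields $\int_{D_{\alpha,\beta}} f\,d\mu\ge\beta\,\mu(D_{\alpha,\beta})$; the symmetric argument applied to $-f$ and $-\alpha$ gives $\int_{D_{\alpha,\beta}} f\,d\mu\le\alpha\,\mu(D_{\alpha,\beta})$. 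Since $\alpha<\beta$, this forces $\mu(D_{\alpha,\beta})=0$. As $\{\underline f<\bar f\}$ is the countable union of the $D_{\alpha,\beta}$ over rational $\alpha<\beta$, we conclude $\bar f=\underline f$ a.e., so the limit $f^{*}:=\lim_N A_N f$ exists $\mu$-a.e.; by Fatou, $f^{*}\in\mathcal{L}^1(X)$.

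It remains to identify $f^{*}$. First I would upgrade the convergence to $\mathcal{L}^1$: approximating $f$ in $\mathcal{L}^1$ by bounded functions (for which $A_N$ converges boundedly, hence in $\mathcal{L}^1$ by dominated convergence) and using $\|A_N f-A_N g\|_1\le\|f-g\|_1$, which follows from the $T$-invariance of $\mu$. Consequently, for every $T$-invariant set $A$ one has $\int_A f^{*}\,d\mu=\lim_N\int_A A_N f\,d\mu=\int_A f\,d\mu$, where $\int_A f\circ T^j\,d\mu=\int_A f\,d\mu$ because $T^{-1}A=A$ and $\mu$ is preserved. Finally, if $T$ is ergodic, each level set $\{f^{*}>c\}$ is $T$-invariant and hence of measure $0$ or $1$, so $f^{*}$ equals a constant a.e.; taking $A=X$ above evaluates that constant as $\int_X f\,d\mu$, which completes the proof.
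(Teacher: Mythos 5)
Your proposal cannot be compared against a proof in the paper, because there is none: Theorem \ref{B} is Birkhoff's classical pointwise ergodic theorem, which the authors state as known background (``the following classical result of Birkhoff'') and never prove, using it only as a black box to link orbits of ergodic and uniquely ergodic transformations to uniform distribution. What you have written is the standard textbook proof --- Garsia's telescoping argument for the maximal ergodic theorem, the sets $D_{\alpha,\beta}=\{\underline f<\alpha<\beta<\bar f\}$ to force $\bar f=\underline f$ a.e., the $\mathcal{L}^1$-contraction $\|A_Nf-A_Ng\|_1\le\|f-g\|_1$ plus approximation by bounded functions to get $\mathcal{L}^1$-convergence, and the evaluation of $f^*$ on invariant sets --- and all of its essential steps are correct, including the two places where care is genuinely needed (that $\bar f>\beta$ on $D_{\alpha,\beta}$ makes the maximal inequality bite with $E\supseteq D_{\alpha,\beta}$, and that measure preservation is what drives both the maximal inequality and the contraction). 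Two small remarks. First, your justification of the $T$-invariance of $\bar f$ and $\underline f$ is too quick: in the difference $A_Nf(Tx)-A_Nf(x)=\bigl(f(T^Nx)-f(x)\bigr)/N$ the term $f(T^Nx)/N$ does not obviously tend to $0$ pointwise for $f\in\mathcal{L}^1$; either argue via Borel--Cantelli using $\sum_N\mu(|f|>\epsilon N)\le\|f\|_1/\epsilon$ and measure preservation, or avoid the issue with the identity $A_Nf(Tx)=\frac{N+1}{N}A_{N+1}f(x)-\frac{1}{N}f(x)$, whose right-hand side has the same $\limsup$ and $\liminf$ as $A_Nf(x)$ wherever $f$ is finite. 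Second, you correctly assume throughout that $\mu$ is $T$-invariant; the paper's definition of ``dynamical system'' never states this hypothesis, but it is indispensable (the theorem is false without it), so your proof in fact supplies an assumption the paper leaves implicit.
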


Thus if $(\mathbf{x}_n)_{n \in \mathbb{N}}$ is constructed as the orbit of a point $\mathbf{x} \in [0,1)^s$ with respect to an ergodic transformation $T$ on $[0,1)^s$, i.e.\ $(\mathbf{x}_n)_{n \in \mathbb{N}} = (T^n \mathbf{x})_{n \in \mathbb{N}}$, we can easily see the connection to Weyl's criterion. Nevertheless, the statement of Theorem \ref{B} is only valid $\mu$-almost sure, so we need stronger conditions in order to assure that the orbit of a certain (or every) point $\mathbf{x} \in [0,1)^s$ under the transformation $T$ is u.d.

\begin{definition}
 A continuous transformation $T:X\longrightarrow X$ on a compact metrizable space $X$ is called uniquely ergodic if there is only one $T$-invariant Borel probability measure on $X$. The system $(X,\mathcal{A},\mu, T)$ is called uniquely ergodic.
\end{definition}

If the transformation $T$ is uniquely ergodic, then Theorem \ref{B} holds for every $x\in X$ and so the associated sequence $(T^{n}x)_{n\in \mathbb{N}}$ is u.d., see e.g.\ \cite[Theorem 6.19]{Walters}.\\ 

A way to further analyze the properties of a dynamical system is to find an isomorphism with a well-known ergodic or uniquely ergodic system.
\begin{definition}
Two dynamical systems $(X_i,\mathcal{A}_i,\mu_i,T_i)$ $(i=1,2)$ are called isomorphic if there exist two sets $A_1\in\mathcal{A}_1$ and $A_2\in\mathcal{A}_2$, with $\mu_i(A_i)=1$ 
$(i=1,2)$ and a bijection $\varphi:A_1\longrightarrow A_2$ such that
\begin{equation*}
 T_2\circ \varphi(x)= \varphi\circ T_1(x)\ , \qquad \forall x \in A_1\ .
\end{equation*}
\end{definition}

A classical example of a uniquely ergodic system is $(\mathbb{Z}_b,\tau)$ (see for instance, \cite{Walters}), where $\mathbb{Z}_b$ is the compact group of $b$-adic integers 
and $\tau:\mathbb{Z}_b\longrightarrow \mathbb{Z}_b$ the addition-by-one map (called odometer). We will shortly recall the connection between $(\mathbb{Z}_b,\tau)$ and low-discrepancy sequences on $[0,1)$. For an integer $b\geq 2$, every $z \in \mathbb{Z}_b$ has a unique expansion of the form
\begin{equation*}
 z=\sum_{j\geq 0}z_jb^j
\end{equation*}
with digits $z_j\in \{0,1,\dots, b-1\}$. For $z \in \mathbb{Z}_b$ we define the $b$-adic Monna map $\varphi_b:\mathbb{Z}_b\longrightarrow [0,1)$, see also \cite{ghl}, by
\begin{equation*}
 \varphi_b\left(\sum_{j\geq 0}z_jb^j  \right)= \sum_{j\geq 0}z_jb^{-j-1}.
\end{equation*}
The restriction of $\varphi_b$ to $\mathbb{N}_0$ is called radical-inverse function in base $b$ and the sequence 
\begin{equation*}
 (\varphi_b(n))_{n\in \mathbb{N}}
\end{equation*}
is the so-called van der Corput sequence in base $b$ which is a low-discrepancy sequence in $[0,1)$.\\

The Monna map is continuous and surjective but not injective. In order to make it an isomorphism we only consider the so-called regular representations, i.e. representations with infinitely 
many digits $z_j$ different from $b-1$. So the Monna map restricted to these regular representations admits an inverse (called pseudo-inverse) $\varphi_b^+:[0,1)\longrightarrow \mathbb{Z}_b$, defined by
\begin{equation*}
 \varphi_b^+\left(\sum_{j\geq 0}z_jb^{-j-1}  \right)= \sum_{j\geq 0}z_jb^j\ ,
\end{equation*}
where $\sum_{j\geq 0}z_jb^{-j-1}$ is a $b$-adic rational in $[0,1)$. \\

Moreover $\varphi_b$ is measure preserving from $\mathbb{Z}_b$ onto $[0,1)$ and it transports the normalized Haar measure on $\mathbb{Z}_b$ to the Lebesgue measure on $[0,1)$. Hence by the unique 
ergodicity of $\tau$ it follows that the sequence $(\tau^n z)_{n\geq 0}$ is uniformly distributed in $\mathbb{Z}_b$ for all $z\in\mathbb{Z}_b$, in particular for $z=0$. Thus the van der Corput sequence $(\varphi_b(\tau^n0))_{n\in \mathbb{N}}$ in base $b$ is uniformly distributed modulo 1.\\

In order to construct multidimensional sequences we need a criterion to ensure that the Cartesian product of several ergodic systems is again ergodic, see e.g.\ \cite{ghl}.
\begin{theorem}\label{product}
 Let $\mathcal{T}_i=(X_i,\mathcal{A}_i,\mu_i,T_i), ~i=1,\ldots,s,$ be uniquely ergodic dynamical systems. Then the dynamical system $\mathcal{T}_1 \times \ldots \times \mathcal{T}_s$ is uniquely ergodic if and only if for all $i,j \in \{1, \ldots, s\}, ~i \neq j,$ the discrete parts of the spectra of $T_i$ and $T_j$ intersect only at 1.
\end{theorem}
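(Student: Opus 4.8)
The plan is to prove both implications through the spectral theory of the associated Koopman operators and to reduce the $s$-factor statement to the case $s=2$ by induction. For a uniquely ergodic system $\mathcal{T}_i$ the transformation $T_i$ preserves a \emph{unique} Borel probability measure $\mu_i$, which is therefore automatically ergodic; the product measure $\mu_1\times\cdots\times\mu_s$ is invariant under $T:=T_1\times\cdots\times T_s$, so the whole question is whether this product measure is the \emph{only} $T$-invariant measure. Associating to each $T_i$ its Koopman operator $U_{T_i}f=f\circ T_i$ on $L^2(X_i,\mu_i)$, the Koopman operator of a product is the tensor product of the factors, and the discrete (point) spectrum of a product of two ergodic systems is exactly the set of products $\{\lambda\mu\}$ of eigenvalues, which is the subgroup generated by the two spectra in $S^1$. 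Grouping the first $s-1$ factors together, the inductive step reduces to two factors, once one checks that the point spectrum of $T_1\times\cdots\times T_{s-1}$ still meets that of $T_s$ only at $1$; I will therefore concentrate on $s=2$, with factors $(X_1,T_1,\mu_1)$ and $(X_2,T_2,\mu_2)$.

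Throughout I would use two standard facts about an ergodic system: an eigenfunction of $U_{T_i}$ has constant modulus, so it may be normalised to modulus one and its eigenvalue lies on $S^1$, and the set of eigenvalues is a subgroup of $S^1$. In a \emph{uniquely} ergodic system one has the additional and crucial property that every eigenfunction admits a continuous representative; this is what lets spectral statements be turned into statements valid at \emph{every} point rather than almost everywhere, and it is the reason unique ergodicity (rather than mere ergodicity) of the factors is assumed.

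For the easy implication I would argue by contraposition. Suppose $T_1$ and $T_2$ share an eigenvalue $\lambda\neq 1$, with continuous unimodular eigenfunctions $\phi_1,\phi_2$. Then $F(x_1,x_2):=\phi_1(x_1)\overline{\phi_2(x_2)}$ is continuous and nonconstant, and satisfies $F\circ T=\lambda\bar\lambda\,F=F$ because $|\lambda|=1$; hence $F$ is a nonconstant invariant function, the product measure is not ergodic, and its ergodic decomposition produces several distinct $T$-invariant measures. Thus unique ergodicity of the product forces the point spectra to meet only at $1$. The substance of the theorem is the converse.

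For the converse I would use the criterion that $T$ is uniquely ergodic if and only if, for every $f\in C(X_1\times X_2)$, the averages $\frac1N\sum_{n=0}^{N-1}f(T^n\cdot)$ converge uniformly to a constant. By Stone--Weierstrass it suffices to treat $f=f_1\otimes f_2$, and writing $f_i=\int f_i\,d\mu_i+g_i$ with $\int g_i\,d\mu_i=0$ splits the average into a constant term equal to $\int f_1\,d\mu_1\int f_2\,d\mu_2$, two one-variable terms of the form $\frac1N\sum_n g_i(T_i^n\cdot)$ that tend to $0$ uniformly by unique ergodicity of the individual factors, and the cross term $\frac1N\sum_{n=0}^{N-1}g_1(T_1^nx_1)g_2(T_2^nx_2)$. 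Controlling this cross term uniformly in $(x_1,x_2)$ is, I expect, the main obstacle. The idea is to expand $g_1,g_2$ along the continuous eigenfunctions of the two systems: for a pair of eigenfunctions with eigenvalues $\lambda,\mu$ the cross average factors as $\phi_1(x_1)\psi_2(x_2)\cdot\frac1N\sum_{n=0}^{N-1}(\lambda\mu)^n$, which vanishes uniformly as $N\to\infty$ unless $\lambda\mu=1$; but $\lambda\mu=1$ together with the group structure of the spectra forces $\lambda\in\mathrm{spec}(T_1)\cap\mathrm{spec}(T_2)=\{1\}$, so only the trivial pair survives and contributes the mean. The remaining continuous-spectrum parts of $g_1,g_2$ must then be shown to contribute nothing uniformly, again invoking unique ergodicity of the factors to upgrade the almost-everywhere decay that comes from ergodicity of the product measure to genuine uniform decay. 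Once the cross term is seen to vanish uniformly, the averages of every $f_1\otimes f_2$ converge uniformly to the product of the means, which is precisely unique ergodicity of $T$, and the induction then yields the full $s$-factor statement. (In the concrete setting of this paper the factors are adding machines, i.e.\ minimal translations on compact abelian groups, where unique ergodicity coincides with minimality and the cross-term difficulty dissolves; the argument above is what is needed in the stated generality.)
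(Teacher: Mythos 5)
Your ``only if'' half is sound (and in fact does not need continuity of the eigenfunctions: a common measurable eigenvalue $\lambda\neq 1$ already gives the nonconstant invariant function $\phi_1\otimes\overline{\phi_2}$, so the product measure is not ergodic and hence not the unique invariant measure). The genuine gap is in the ``if'' half, at exactly the step you flag and then defer: showing that the \emph{continuous-spectrum} part of the cross term $\frac{1}{N}\sum_{n=0}^{N-1}g_1(T_1^nx_1)g_2(T_2^nx_2)$ vanishes uniformly. No argument can close this step under the stated hypotheses, because in that generality the statement is false. Take $T_1=T_2=T$ to be a uniquely ergodic \emph{weakly mixing} homeomorphism of a compact metric space (such systems exist, e.g.\ the Chacon subshift, or a uniquely ergodic model of any weakly mixing system furnished by the Jewett--Krieger theorem). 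Both discrete spectra equal $\{1\}$, so the hypothesis of the theorem holds, and the product measure $\mu\times\mu$ is even ergodic; nevertheless $T\times T$ also preserves the diagonal measure, the image of $\mu$ under $x\mapsto(x,x)$, so the product is not uniquely ergodic. In your notation: for continuous real $g$ with $\int g\,d\mu=0$ and $g\not\equiv 0$, unique ergodicity of the \emph{factor} applied to $g^2$ gives $\frac{1}{N}\sum_{n=0}^{N-1}g(T^nx)g(T^nx)\to\int g^2\,d\mu>0$ at every point of the diagonal, so the cross term cannot decay uniformly; the almost-everywhere decay coming from ergodicity of $\mu\times\mu$ admits no upgrade. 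The hypothesis your argument actually needs is that the factors have \emph{purely discrete} spectrum: then the eigenfunctions span $L^2$, there is no ``remaining part'' of $g_i$, and your eigenfunction computation essentially is the whole proof. Note for calibration that the paper itself gives no proof of this theorem --- it quotes it from \cite{ghl} --- and every application in the paper is to odometers which are purely discrete-spectrum systems by the results of \cite{glt} and \cite{solomyak} quoted before Theorem 3.

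Your reduction to $s=2$ has a second gap of the same nature, which you acknowledge but do not resolve: after grouping, one needs the point spectrum of $T_1\times\cdots\times T_{s-1}$ --- which contains all products $\gamma_1\cdots\gamma_{s-1}$ with $\gamma_i$ an eigenvalue of $T_i$ --- to meet the spectrum of $T_s$ only at $1$, and this does \emph{not} follow from the pairwise hypothesis. Concretely, let $1,\alpha,\beta$ be rationally independent and take the circle rotations by $\alpha$, $\beta$ and $\alpha+\beta$: each is uniquely ergodic with purely discrete spectrum, the spectra intersect pairwise only at $1$, yet $e^{2\pi i(x_1+x_2-x_3)}$ is a nonconstant invariant function of the triple product, which is therefore not even ergodic, let alone uniquely ergodic. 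So for $s\geq 3$ the correct hypothesis is joint independence of the eigenvalue groups (if $\gamma_1\cdots\gamma_s=1$ with $\gamma_i\in\Gamma_i$, then every $\gamma_i=1$), not pairwise triviality; this is a defect of the statement as quoted in the paper, but your induction inherits it rather than repairing it, and a correct proof would have to work with the joint condition throughout.
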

In \cite{ghl}, the spectrum of $(\mathbb{Z}_{b}, \tau)$ is given explicitly.  Furthermore the authors show that if $b_1, b_2$ denote positive integers and $\tau_{b_i}$ denotes the addition-by-one on $\mathbb{Z}_{b_i}$, then the dynamical systems $\mathbb{Z}_{b_1}$ and $\mathbb{Z}_{b_2}$ are spectrally disjoint if and only if $b_1$ and $b_2$ are coprime. This is exactly the condition proved by Halton in \cite{Halton} in order to obtain a low-discrepancy sequence in $[0,1)^s$ by combining coordinatewise van der Corput sequences. The resulting sequence $(\phi_\mathbf{b}(n))_{n \in \mathbb{N}} = (\phi_{b_1}(n), \ldots, \phi_{b_d}(n))_{n \in \mathbb{N}}$ is called $\mathbf{b}$-adic Halton sequence, where $\mathbf{b}$ is an $s$-dimensional vector of pairwise coprime integers $b_i, ~i=1, \ldots, d$.\\

The aim of the present article is to extend the above idea to point sequences with irrational bases. Such sequences in the unit interval were investigated by several authors. Barat and Grabner \cite{bg} consider the so-called $\beta$-adic van der Corput sequence $(\phi_\beta(n))_{n \in \mathbb{N}}$ on $[0,1)$ and similar constructions. They prove that $(\phi_\beta(n))_{n \in \mathbb{N}}$ is low-discrepancy, where $\beta$ is the characteristic root of special linear recurrences. Ninomiya \cite{ninomiya} considers the discrepancy of point sequences on $[0,1)$ for a slightly greater class of irrational bases $\beta$. In \cite{im} the underlying construction was extended to piecewise linear maps. Furthermore Steiner \cite{steiner1} considers so called bounded remainder sets in this setting. In a second article Steiner \cite{steiner2} considers van der Corput sequences on abstract numeration systems and gives conditions under which they are low discrepancy. Discrepancy bounds for a higher dimensional extension of \cite{ninomiya} are given in \cite{mori}. Note that the construction in \cite{mori} is different from that in the present article.\\

Carbone \cite{carbone} and Drmota and Infusino \cite{drmota} investigate the discrepancy of point sequences generated by the so-called Kakutani splitting procedure. Carbone completely characterises the growth order of the discrepancy for a two parametric subfamily, the so-called LS-sequences. Moreover, Aistleitner, Hofer and Ziegler \cite{ahz} give conditions under which an $s$-dimensional vector of LS-sequences is not u.d.\ in $[0,1)^s$.\\ 

The remainder of the article is structured as follows: in the next section we formulate a characterization of uniquely ergodic systems which are constructed as Cartesian products of odometers on numeration systems related to linear recurrences. In the third section we give conditions under which a construction like that of Halton produces an u.d.\ sequence on $[0,1)^s$ with respect to irrational bases $\beta_1, \ldots, \beta_s$. Furthermore we present a parametric class of sequences which satisfies these conditions. Finally we prove that the ergodic Kakutani-Fibonacci-transformation, presented in \cite{carbone3}, is in fact uniquely ergodic.

\section{General $G$-odometers}

In this section we consider odometers on numeration systems, which are related to linear recurrences. For a detailed discussion of such number systems we refer to \cite{Fraenkel, glt, gt, gt2}. We first recall some basic results and definitions and then investigate Cartesian products of odometers.

\begin{definition}
 Let $(G_n)_{n \geq 0}$ be an increasing sequence of positive integers with $G_0=1$. Then every positive integer can be expanded in the following way
\begin{equation}\label{greedy}
 \forall n\in\mathbb{N}\ ,\quad n=\sum_{k=0}^{\infty}\varepsilon_k G_k,
\end{equation}
where $\varepsilon_k \in \{0, \ldots, \lfloor G_{k+1}/G_k \rfloor \}$ and $\lfloor x \rfloor$ denotes the integral part of $x$, that is the greatest integer less than or equal to $x \in \mathbb{R}$. This expansion (called $G$-expansion) is uniquely determined and finite, provided that for every $K$
\begin{equation}\label{eq1}
\sum_{k=0}^{K-1}\varepsilon_k G_k < G_K .
\end{equation}
\end{definition}
The digits $\varepsilon_k$ can be computed by the greedy algorithm (see for instance \cite{Fraenkel}) and the sequence $G = (G_n)_{n \geq 0}$ is called numeration system.\\

We denote by $\mathcal{K}_G$ the subset of sequences that verify the property (\ref{eq1}) and the elements in $\mathcal{K}_G$ are called G-admissible. In order to extend the addition-by-one map defined on $\mathbb{N}$ to $\mathcal{K}_G$ we introduce $\mathcal{K}_G^0 \subseteq \mathcal{K}_G$
\begin{equation}
 \mathcal{K}_G^0=\left\lbrace x\in \mathcal{K}_G\ : \exists M_x, \forall j\geq M_x\quad \sum_{k=0}^{j}\varepsilon_k G_k\ < G_{j+1}-1\right\rbrace\ .
\end{equation}
If we denote by $x(j)=\sum_{k=0}^{j}\varepsilon_k G_k$, then we set
\begin{equation}\label{eq2}
 \tau(x)=(\varepsilon_0(x(j)+1)\dots \varepsilon_j(x(j)+1))\varepsilon_{j+1}\varepsilon_{j+2}\dots \ ,
\end{equation}
for every $x\in \mathcal{K}_G^0$ and $j\geq M_x$. This definition does not depend on the choice of $j\geq M_x$ and can be easily extended to sequences $x$ in $\mathcal{K}_G\setminus \mathcal{K}_G^0$ by $\tau(x)=0=(0^{\infty})$. In this way the transformation $\tau$ is defined on $\mathcal{K}_G$ and it is called $G$-odometer. We refer to \cite{glt} for a complete survey on odometers related to general numeration systems.\\

In this article we consider only numeration systems where the base sequence is a linear recurrence. Let $G_0=1$ and $G_k=a_0G_{k-1}+\dots +a_{k-1}G_0+1$ for $k<d$. Then $G_n$ for $n \geq d$ 
is determined by a recurrence of order $d \geq 1$, i.e.\
\begin{equation}\label{rec}
 G_{n+d}=a_0G_{n+d-1}+\dots + a_{d-1}G_n \qquad n\geq 0\ .
\end{equation}
The solution of the characteristic equation of the numeration system $G$
\begin{equation}\label{alpha}
 x^{d} = a_0 x^{d - 1} + \ldots + a_{d - 1}.
\end{equation}
plays a central role. We will be mainly interested in numeration systems where the solution of \eqref{alpha} is a Pisot number $\beta$. Note that $\beta$ is always a Pisot number if
\begin{equation}\label{descent}
a_0 \geq \ldots \geq a_{d-1} \geq 1,
\end{equation} 
see \cite[Theorem 2]{brauer}. By \cite{parry} we get that in this case the so-called Parry's $\beta$-expansion of $\beta$ is finite, i.e.\
\begin{equation}\label{par}
 \beta = a_0 + \frac{a_1}{\beta} + \ldots + \frac{a_{d - 1}}{\beta^{d - 1}},
\end{equation}
where $a_0 = \lfloor \beta \rfloor$. At the end of the last section we will also consider numeration systems where \eqref{descent} does not hold.\\

For numeration systems where the characteristic root $\beta$ is a Pisot number which satisfies \eqref{par}, we have that a finite sum $\sum_{k=0}^{\infty}\varepsilon_kG_k$ is the expansion of some integer if and only if the digits 
$\varepsilon_k$ of the $G$-expansion satisfy
\begin{equation}\label{maximal}
 (\varepsilon_k,\varepsilon_{k-1},\dots,\varepsilon_0,0^{\infty})< (a_0,a_1,\dots,a_{d-1})^{\infty}\ ,
\end{equation}
for every $k$ and $<$ denoting the lexicographic order (see \cite{parry}). Representations $(\varepsilon_k,\dots,\varepsilon_0)$ verifying this condition are called admissible representations and so they belong to $\mathcal{K}_G$.\\

In \cite[Theorem 5]{glt}, the authors show that the odometer on an admissible numeration system $G$ is uniquely ergodic and that the corresponding unique invariant measure $\mu$ is given by
\begin{align}
 &\mu(Z) \label{mu} =\\ 
 &\frac{F_{K} \beta^{d-1} + (F_{K + 1} - a_0 F_K) \beta^{d-2} + \ldots + (F_{K+d-1} - a_0 F_{K+d-2} - \ldots - a_{d-2} F_K)}{\beta^K (\beta^{d-1} + \beta^{d-2} + \ldots + 1)},\notag
\end{align}
where $F_K := \# \{ n < G_K: n \in Z \}$ and $Z$ is the cylinder with fixed digits $\epsilon_0, \ldots, \epsilon_{K-1}$. Note that the formula in \cite[Theorem 5]{glt} included a misprint and was stated in corrected form in \cite{bg}.\\

In the sequel we want to apply Theorem \ref{product}, thus we need information on the spectrum of the $G$-odometer. We introduce the following two hypotheses:
\begin{hyp}[Grabner, Tichy and Liardet \cite{glt}]\label{hypA}
There exists an integer $b > 0$ such that for all $k$ and 
\begin{equation*}
 N = \sum_{i = 0}^k \epsilon_i G_i + \sum_{j = k + b + 2}^\infty \epsilon_j G_j,
\end{equation*}
the addition of $G_m$ to $N$, where $m \geq k + b + 2$, does not change the digits $\epsilon_0, \ldots, \epsilon_k,$ in the greedy representation i.e.\
\begin{equation*}
N + G_m = \sum_{i = 0}^k \epsilon_i G_i + \sum_{j = k + 1}^\infty \epsilon'_j G_j.
\end{equation*}
\end{hyp}

\begin{hyp}[Frougny and Solomyak \cite{frougny}]\label{hypB}
 The solution $\beta$ of equation \eqref{alpha} is a Pisot number such that all numbers of the set $\mathbb{Z}[\beta^{-1}]$ have finite $\beta$-expansions.
\end{hyp}

In \cite{glt} the authors remark that the Multinacci sequence, i.e.\ $a_0 = \ldots = a_{d-1} = 1$, fulfills Hypothesis \ref{hypA}. Several authors worked on algebraic characterizations of Pisot numbers $\beta$ which satisfy Hypothesis \ref{hypB}. Frougny and Solomyak show that \eqref{alpha} implies Hypothesis \ref{hypB} and they give a full characterization of all Pisot numbers of degree two with this property. Furthermore Hollander \cite{holl} states another sufficient condition for Hypothesis \ref{hypB} and Akiyama \cite{aki} characterizes all Pisot units of degree three satisfying Hypothesis \ref{hypB}. Further progress was also made by Akiyama et al.\ \cite{aki2} who prove Hypothesis \ref{hypB} for a large class of Pisot numbers of degree three by using the theory of shift radix systems. Nevertheless there exists no complete algebraic characterization for Pisot numbers satisfying Hypothesis \ref{hypB} of degree greater than two. Note that both hypotheses can be satisfied by the same numeration system but, to the best of the authors knowledge, it is unknown if the two hypotheses are equivalent, see \cite{glt}.\\

Grabner, Liardet and Tichy \cite[Theorem 6]{glt} and Solomyak \cite[Theorem 4.1]{solomyak} show that the odometer on the base system $G$ has purely discrete spectrum provided that one of the above Hypotheses holds. Furthermore we obtain in both cases that the set of eigenvalues of the transformation is given by 
\begin{equation}\label{Gamma}
 \Gamma := \{z \in \mathbb{C}: \lim_{n \rightarrow \infty} z^{G_n} = 1 \}.
\end{equation}

\begin{theorem}\label{main1}
 Let $G^1, \ldots, G^s$ be numeration systems given by \eqref{rec}. Assume that the coefficients of the linear recurrences are given as $a_j^i = b_i, ~i = 1, \ldots, s, ~j = 0, \ldots, (d_i - 1),$ with pairwise coprime, positive integers $b_i, i = 1, \ldots, s$. Furthermore let $\frac{\beta_i^{k}}{\beta_j^l} \notin \mathbb{Q}$, for all $l, k \in \mathbb{N}$, where $\beta_1, \ldots, \beta_s$ are the roots of the characteristic equations \eqref{alpha}. Then the dynamical system which is constructed as the $s$-dimensional Cartesian product of the corresponding odometers, i.e.\ $((\mathcal{K}_{G^1}, \tau_1) \times \ldots \times (\mathcal{K}_{G^s}, \tau_s))$, is uniquely ergodic.
\end{theorem}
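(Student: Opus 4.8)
The plan is to reduce the statement to an application of Theorem~\ref{product}. That theorem tells us that the product $(\mathcal{K}_{G^1},\tau_1)\times\cdots\times(\mathcal{K}_{G^s},\tau_s)$ is uniquely ergodic precisely when each factor is uniquely ergodic and the discrete parts of the spectra of any two factors meet only at $1$. Thus the proof naturally splits into three tasks: first, check that every single odometer $(\mathcal{K}_{G^i},\tau_i)$ is uniquely ergodic and has purely discrete spectrum; second, identify the eigenvalue group $\Gamma_i$ of the $i$-th odometer; and third, prove the spectral disjointness $\Gamma_i\cap\Gamma_j=\{1\}$ for $i\neq j$.

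For the first task, note that the $i$-th recurrence \eqref{rec} has all coefficients equal to $b_i$, so condition \eqref{descent} is satisfied and $\beta_i$ is a Pisot number whose $\beta$-expansion \eqref{par} is finite. Consequently Hypothesis~\ref{hypB} holds (by Frougny and Solomyak \cite{frougny}), and by \cite[Theorem 5]{glt} the odometer $(\mathcal{K}_{G^i},\tau_i)$ is uniquely ergodic, while \cite[Theorem 6]{glt} and \cite[Theorem 4.1]{solomyak} guarantee that it has purely discrete spectrum with eigenvalue set $\Gamma_i=\{z\in\C:\lim_{n\to\infty}z^{G_n^i}=1\}$ as in \eqref{Gamma}. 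This reduces everything to understanding $\Gamma_i$.

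The key step is to describe $\Gamma_i$ explicitly enough to control the pairwise intersections. Writing $z=e^{2\pi i\theta}$, membership $z\in\Gamma_i$ is equivalent to $\|\theta G_n^i\|\to0$, where $\|\cdot\|$ denotes the distance to the nearest integer. I will extract the arithmetic content of $\Gamma_i$ from the recurrence in two ways. On the one hand, reducing \eqref{rec} modulo $b_i$ shows that $b_i\mid G_n^i$ for all large $n$ and, iterating, $b_i^k\mid G_n^i$ eventually for every $k$; hence every root of unity whose order has all prime factors dividing $b_i$ lies in $\Gamma_i$. Conversely, since $\gcd(b_i,p)=1$ makes the recurrence invertible modulo a prime $p\nmid b_i$, no such $p$ can divide $G_n^i$ for all large $n$ (otherwise a whole block of consecutive terms would vanish modulo $p$ and, running the recurrence backwards, force $G_0^i\equiv0$, contradicting $G_0^i=1$). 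Thus the rational points of $\Gamma_i$ are exactly the roots of unity of order supported on the primes dividing $b_i$. On the other hand, using $\beta_i^{-1}G_n^i-G_{n-1}^i\to0$ (which follows from the dominant-root expansion $G_n^i=c_i\beta_i^n+o(1)$, the subdominant conjugates being $<1$ in modulus) one gets $e^{2\pi i x}\in\Gamma_i$ for every $x\in\mathbb{Z}[\beta_i^{-1}]$, so $\Gamma_i$ also contains genuinely irrational angles when $d_i\geq2$.

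Finally I would establish $\Gamma_i\cap\Gamma_j=\{1\}$ by distinguishing whether the common angle $\theta$ is rational or irrational. If $\theta$ is rational, the description above forces its denominator to be supported simultaneously on the primes dividing $b_i$ and on those dividing $b_j$; since $b_i$ and $b_j$ are coprime the denominator is $1$, so $z=1$. If $\theta$ is irrational, I pass to the dominant-root asymptotics to rewrite $z\in\Gamma_i\cap\Gamma_j$ as the two conditions $\|\theta c_i\beta_i^n\|\to0$ and $\|\theta c_j\beta_j^n\|\to0$; here the hypothesis $\beta_i^k/\beta_j^l\notin\Q$ for all $k,l$ enters, expressing that $\beta_i$ and $\beta_j$ are multiplicatively independent, which is what rules out a common non-torsion eigenvalue and leaves only $z=1$. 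Once the spectral disjointness is in hand, Theorem~\ref{product} yields that the full Cartesian product is uniquely ergodic. I expect this last point --- ruling out common irrational eigenvalues purely from the multiplicative independence of the Pisot bases --- to be the main obstacle, since the root-of-unity part is settled by the elementary coprimality argument above.
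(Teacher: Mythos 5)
Your route is the same as the paper's: reduce to Theorem \ref{product}, get unique ergodicity and pure point spectrum of each factor from Hypothesis \ref{hypB} (Frougny--Solomyak, Solomyak, and \cite{glt}), identify the eigenvalue set as $\Gamma_i$ in \eqref{Gamma}, and then aim at $\Gamma_i\cap\Gamma_j=\{1\}$. Within that route, your treatment of the rational eigenvalues is correct and in fact more careful than the paper's: the forward reduction of \eqref{rec} modulo $b_i$ (giving $b_i^k\mid G_n^i$ eventually) and the backward reduction modulo a prime $p\nmid b_i$ (forcing $G_0^i\equiv 0$, a contradiction) is exactly the right way to pin down the rational part of $\Gamma_i$, which the paper only asserts.

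The gap is the irrational case, which you yourself flag as ``the main obstacle'' --- and it is precisely where the paper's proof does its real work. The paper's decisive claim, extracted from $G_n^i/\beta_i^n\to C_i$ together with the divisibility facts, is an \emph{equality}
\begin{equation*}
\Gamma_i=\left\{\exp\left(2\pi i\,\frac{c}{b_i^m\beta_i^l}\right)\colon\ c,m,l\in\mathbb{N}\cup\{0\}\right\},
\end{equation*}
that is, an upper bound on the eigenvalue group: \emph{every} eigenvalue, with rational angle or not, has this special algebraic form. With that in hand, disjointness is finite algebra: a common eigenvalue gives $\frac{c}{b_i^m\beta_i^l}-\frac{c'}{b_j^{m'}\beta_j^{l'}}\in\Z$, and coprimality of $b_i,b_j$ together with $\beta_i^k/\beta_j^l\notin\Q$ forces the angle to be an integer, i.e.\ $z=1$. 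Your proposal only establishes containments in the easy direction (angles in $\Z[\beta_i^{-1}]$ and roots of unity supported on the primes of $b_i$ lie in $\Gamma_i$), plus the exact rational part; you never show that an irrational $\theta$ with $\|\theta G_n^i\|\to 0$ must lie in $\bigcup_{m,l}b_i^{-m}\beta_i^{-l}\Z$. Without that reverse containment, the final sentence of your plan --- that multiplicative independence of $\beta_i$ and $\beta_j$ ``rules out a common non-torsion eigenvalue'' --- is an assertion, not an argument: the two conditions $\|\theta C_i\beta_i^n\|\to 0$ and $\|\theta C_j\beta_j^n\|\to 0$ do not by themselves reduce to a statement about the ratios $\beta_i^k/\beta_j^l$. (This characterization of which $\theta$ can satisfy $\|\theta G_n\|\to 0$ is a Pisot-type result, and it is where Hypothesis \ref{hypB} genuinely enters; it is the content of the paper's terse ``by simple considerations'' step.) So the one ingredient your proof defers is exactly the one that makes the theorem true.
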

\begin{proof}
 It follows by \cite[Main Theorem]{solomyak} that the $G^i$ fulfill Hypothesis \ref{hypB} and thus the components of the $s$-dimensional dynamical system are uniquely ergodic. Furthermore we obtain that their spectrum is given by \eqref{Gamma}. By Theorem \ref{product}, we derive that the Cartesian product is uniquely ergodic if and only if $\Gamma_i \cap \Gamma_j = {1}$ for all $1 \leq i < j \leq d$. As noted in \cite{glt}, we have the following connection between $\beta_i$ and the corresponding sequence $G^i_n$,
\begin{equation}\label{approx}
 \lim_{n \rightarrow \infty} \frac{G^i_n}{\beta_i^n} = C_i,
\end{equation}
where the constant $C_i$ can be computed by residue calculus. Now consider a fixed $l \in \mathbb{N}$ and
\begin{align*}
 \exp \left( 2 \pi i \frac{G^i_n}{\beta_i^l} \right) &\approx \exp \left( 2 \pi i C_i \beta_i^{n - l} \right)\\
 &\approx \exp \left( 2 \pi i G^i_{n - l} \right),
\end{align*}
and thus
\begin{equation*}
 \lim_{n \rightarrow \infty} \exp \left( 2 \pi i \frac{G^i_n}{\beta_i^l} \right) = \lim_{n \rightarrow \infty} \exp \left( 2 \pi i G^i_{n - l} \right) = 1,
\end{equation*}
where $C_i$ is given in \eqref{approx}. Furthermore, it is easy to see that for every $k \in \mathbb{N}$ there exists a $n_0$ with $b_i^k \mid G_{n}$ for all $n \geq n_0$ and there exist no $b', n_0' \in \mathbb{N}$ with $\gcd(b', b_i) = 1$ such that $b' \mid G_{n}$ for all $n \geq n_0'$. By simple considerations we get that $\Gamma_i$ can be written as
\begin{equation*}
 \Gamma_i = \left\{ \exp \left( 2 \pi i \frac{c_i}{b_i^m \beta_i^l} \right) \colon m,l,c_i \in \mathbb{N} \cup \{0\} \right\}.
\end{equation*}
An application of Theorem \ref{product} completes the proof.
\end{proof}

\section{Uniform distribution of the $\beta$-adic Halton sequence}

The purpose of this section is to formulate conditions on the odometers $(\mathcal{K}_{G^1}, \tau_1),$ $\ldots, (\mathcal{K}_{G^s}, \tau_s)$ such that their product dynamical system is uniquely ergodic and the Monna map transports the measure $\mu_1 \times \ldots \times \mu_s$, where the $\mu_i$ are given by \eqref{mu}, to the Lebesgue measure on $[0,1)^s$. Under such assumptions we show the resulting $s$-dimensional, $\beta$-adic Halton sequence to be u.d.\ in $[0,1)^s$.\\

First we extend the definition of the Monna map to irrational bases $\beta > 1$. Let
\begin{equation*}
 n = \sum_{j \geq 0} \epsilon_j G_j
\end{equation*}
be the $G$-expansion of an integer $n$. We define the $\beta$-adic Monna map $\phi_\beta \colon \mathcal{K}_G \rightarrow \mathbb{R}^+$ as
\begin{equation*}
\phi_\beta(n) = \phi_\beta \left(\sum_{j \geq 0} \epsilon_j G_j \right) = \sum_{j \geq 0} \epsilon_j \beta^{-j-1}\ .
\end{equation*}
We call
\begin{equation}\label{alphaexp}
 x = \sum_{j \geq 0} \epsilon_j \beta^{-j-1},
\end{equation}
the $\beta$-expansion of $x$. Furthermore, as in the first section, we define the radical inverse function as restriction of $\phi_\beta$ on $\mathcal{K}^0_G$ and define the pseudo-inverse $\phi_{\beta}^+$ similarly. In this context we define the $\boldsymbol \beta$-adic Halton sequence as $\phi_{\boldsymbol \beta}(n))_{n \in \mathbb{N}} = (\phi_{\beta_1}(n), \ldots, \phi_{\beta_s}(n))_{n \in \mathbb{N}}$, where $\boldsymbol \beta = (\beta_1, \ldots, \beta_s)$ and the $\beta_i$ are solution of the corresponding characteristic equations.\\ 

Note that even if a Pisot number $\beta$ is chosen as the solution of \eqref{alpha}, it is not sure that the image of $\mathcal{K}_0$ under $\phi_\beta$ is a subset of $[0,1)$ or dense in $[0,1)$. The following lemma gives a characterization of numeration systems for which this is true.

\begin{lemma}\label{vdC}
 Let $\mathbf{a} = (a_0, \ldots, a_{d-1})$, let the integers $a_0, \ldots, a_{d-1} \geq 0$ be the coefficients defining the numeration system $G$ and assume that the corresponding characteristic root $\beta$ satisfies \eqref{alpha}. Then $\phi_\beta(\mathbb{N}) \subset [0,1)$ and $\phi_\beta(\mathbb{N}) \not\subset [0,x)$ for all $0 < x < 1$ if and only if $\mathbf{a}$ can be written either as
\begin{align}
 \mathbf{a} &= (a_0, \ldots, a_0),\label{case1}\\
 \mathbf{a} &= (a_0, a_0 - 1, \ldots, a_0 - 1, a_0),\label{case2}\\
 \mathbf{a} &= (a_0, \ldots, a_0, a_0 + 1)\label{case4}
\end{align}
or
\begin{equation}\label{case3}
 \mathbf{a} = (\mathbf{a}', \ldots, \mathbf{a}', \mathbf{a}''),
\end{equation}
where $a_0 > 0$, $\mathbf{a}', \mathbf{a}''$ are of equal length and are of the form
\begin{align*}
 \mathbf{a}' &= (a_0, \ldots, a_0, a_0 - 1),\quad \mathbf{a}'' = (a_0, \ldots, a_0)\text{ or }\\
 \mathbf{a}' &= (a_0, a_0 - 1, \ldots, a_0 - 1),\quad \mathbf{a}'' = (a_0, a_0 - 1, \ldots, a_0 - 1, a_0) 
\end{align*}
\end{lemma}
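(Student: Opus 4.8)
The plan is to reduce both conditions to a single statement about the supremum $S := \sup_{n\in\mathbb N}\phi_\beta(n)$. Indeed, $\phi_\beta(\mathbb N)\subset[0,1)$ holds exactly when no integer has image $\ge 1$, while $\phi_\beta(\mathbb N)\not\subset[0,x)$ for every $x<1$ means precisely that the images accumulate at $1$ from below; together these are equivalent to $S=1$ with the value $1$ not attained. So the whole lemma becomes: $S=1$ (and unattained) if and only if $\mathbf a$ has one of the forms \eqref{case1}--\eqref{case3}. First I would record the key identity obtained from \eqref{par}, namely $1=a_0\beta^{-1}+a_1\beta^{-2}+\dots+a_{d-1}\beta^{-d}$, which shows that the ``target'' value $1$ is exactly the $\phi_\beta$-value one would read off from the $G$-digit block $(a_0,a_1,\dots,a_{d-1})$.

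Next I would compute $S$ by analysing which digit strings are admissible. The crucial structural observation is that $\phi_\beta$ \emph{reverses} the role of the $G$-digits: admissibility \eqref{maximal} is a condition on the digits read from the most significant downward, whereas $\phi_\beta(\sum_j\epsilon_jG_j)=\sum_j\epsilon_j\beta^{-j-1}$ puts the largest weight $\beta^{-1}$ on the \emph{least} significant digit $\epsilon_0$. Thus $S$ is the supremum of $\sum_{j\ge 0}\epsilon_j\beta^{-j-1}$ taken over all strings whose reversals are Parry-admissible in the sense of \eqref{maximal}. I would identify the extremal string: a natural first attempt is to choose $\epsilon_0,\epsilon_1,\dots$ greedily as large as admissibility permits, but this choice must be justified in view of the non-monotonicity discussed below. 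For each of the families \eqref{case1}--\eqref{case3} the extremal string turns out to be eventually periodic, and summing the resulting geometric-type series (simplified using the characteristic equation \eqref{alpha}, equivalently \eqref{par}) yields $S$ in closed form. A convenient cross-check is to evaluate $\phi_\beta(G_K-1)$, since $G_K-1$ is the largest integer below $G_K$ and its digits realise the maximal admissible block; letting $K\to\infty$ along the appropriate residue class recovers the same value of $S$.

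I would then impose $S=1$ and read off the admissible $\mathbf a$. Here the identity $1=\sum_{j=1}^d a_{j-1}\beta^{-j}$ is decisive: attaining the value $1$ \emph{exactly} would require the block $(a_0,\dots,a_{d-1})$ to be admissible, whose reversal condition is $(a_{d-1},\dots,a_1,a_0,0^\infty)<(a_0,\dots,a_{d-1})^\infty$. The requirement that the supremum equal $1$ while the value $1$ is never attained therefore forces $\mathbf a$ to be ``almost palindromic'' with a precisely controlled last coordinate, and unwinding this constraint should produce exactly the constant pattern \eqref{case1}, the palindromic pattern \eqref{case2}, the pattern \eqref{case4} with enlarged last digit, and the composite blocks \eqref{case3}. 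For the ``if'' direction I would verify case by case that each listed $\mathbf a$ gives $S=1$ via the geometric-series computation above; for the ``only if'' direction I would show that any $\mathbf a$ outside these families produces either $S>1$ (so $\phi_\beta(\mathbb N)\not\subset[0,1)$) or $S<1$ (so the images remain in $[0,x)$ for some $x<1$).

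The main obstacle is precisely the reversal mismatch combined with the failure of monotonicity: because the Pisot root satisfies $\beta<a_0+1$, a single-digit increase at position $j$ need not increase $\phi_\beta$, since the forced decrease of the later digits can outweigh the gain $\beta^{-j-1}$. Consequently the extremal string cannot be pinned down by a one-line lexicographic argument, and the value of $S$ genuinely depends on the shape of $\mathbf a$; this is what splits the analysis into the several families \eqref{case1}--\eqref{case3} and makes the ``only if'' direction the most laborious part. I expect the bookkeeping of the boundary cases---where the extremal block \emph{just} fails to be admissible, so that $1$ is approached but not hit---to be the delicate point, requiring the admissibility condition \eqref{maximal} in full rather than only its leading-window consequence.
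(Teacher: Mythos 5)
Your proposal takes essentially the same route as the paper's own proof: both reduce the lemma to showing that the supremum of $\phi_\beta$ over admissible digit strings equals $1$ and is not attained, both hinge on the identity $1=\frac{a_0}{\beta}+\ldots+\frac{a_{d-1}}{\beta^d}$ coming from \eqref{alpha}, and both determine the extremal eventually-periodic admissible string by a case analysis on the shape of $\mathbf{a}$ (the paper's candidates being $(a_0,\ldots,a_{d-2},a_{d-1}-1)^\infty$, with image exactly $1$, versus $(a_0,\ldots,a_k,(a_0-1,a_0,\ldots,a_{k-1})^\infty)$ in the degenerate cases). Your identification of the admissibility/Monna-map reversal and the resulting non-monotonicity as the delicate point is precisely where the paper's case distinctions \eqref{case1}--\eqref{case3} versus \eqref{case4} come from, so the plan is a faithful match to the published argument.
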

\begin{proof}
It follows by \eqref{alpha} that
\begin{equation}\label{eins}
 \frac{a_0}{\beta} + \ldots + \frac{a_{d-1}}{\beta^{d}} = 1.
\end{equation}
Furthermore we know that for all admissible representations of an integer $n$ we have
\begin{equation*}
 (\varepsilon_k,\varepsilon_{k-1},\dots,\varepsilon_0,0^{\infty})< (a_0,a_1,\dots,a_{d-1})^{\infty}\ ,
\end{equation*}
for every $k$ and $<$ denoting the lexicographic order. If we have $a_0 \neq \max_{0 \leq i < d}(a_i)$, by \eqref{eins} we obtain
\begin{equation*}
 \max_{n \in \mathbb{N}} \phi_\beta(n) \leq \sum_{i = 1}^\infty \frac{a_0}{\beta^i} \leq 1,
\end{equation*}
where equality holds for both inequations only if $\mathbf{a}$ is of the form \eqref{case4}.\\

Assume now that $a_0 = \max_{0 \leq i < d}(a_i)$ and there exist an $0 < j < d$ such that $a_j < a_0 - 1$ and $k \geq 0$ is the maximal integer with $a_i = a_0$ for all $i \leq k$. Then the representation $0 < k < d-1$ and $(a_0, \ldots, a_k, (a_0-1, a_0, \ldots, a_{k - 1})^\infty)$ is admissible. But by \eqref{eins} we get that the image of the $\beta$-adic Monna map of this representation is strictly greater than 1. By a similar argument can be applied in the case $a_{d-1} \neq a_0$.\\

Note that we have excluded every case which is not of the form $(a_0, a_1, \ldots, a_{d-2}, a_0)$, where $a_i \in \{a_0 - 1, a_0\}$ for $1 \leq i \leq d-2$. By \eqref{eins} and the construction of the van der Corput sequence we obtain that the image of the $\beta$-adic Monna map of $(a_0, \ldots, a_{d-2}, a_{d-1} - 1)^\infty$ is 1. Thus it is sufficient to show that $(a_0, \ldots, a_{d-2}, a_{d-1} - 1)^\infty$ is admissible and its image under the Monna map is maximal. This is clear if $a$ is of the form \eqref{case1}-\eqref{case3}.\\ 

Finally assume that $\mathbf{a} = (a_0, a_1, \ldots, a_{d-2}, a_0)$, where $a_i \in \{a_0 - 1, a_0\}$ for $1 \leq i \leq d-2$, $\mathbf{a}$ is not included in one of these cases and let $k$ be defined as above. Then $0 < k < d-1$ and $(a_0, \ldots, a_k, (a_0-1, a_0, \ldots, a_{k - 1})^\infty)$ is maximal and admissible. But this is only equal to $(a_0, \ldots, a_{d-2}, a_{d-1} - 1)^\infty$ when \eqref{case3} holds.  
%\begin{align*}
% \max_{n \in \mathbb{N}} \phi_\beta(n) &= \sum_{i = 0}^\infty \left(\sum_{j = 1}^{d-1} \frac{a_{d-j}}{\beta^j} + \frac{a_0 - 1}{\beta^d} \right) \left(\frac{1}{\beta^{d}} \right)^i\\
% &= \left(\sum_{j = 1}^{d-1} \frac{a_{d-j}}{\beta^j} + \frac{a_0 - 1}{\beta^d} \right) \frac{1}{1 - \frac{1}{\beta^d}}.
%\end{align*}
%From $\max_{n \in \mathbb{N}} \phi_\beta(n) = 1$ we obtain
%\begin{align*}
% \sum_{j = 1}^{d-1} \frac{a_{d-j}}{\beta^j} + \frac{a_0 - 1}{\beta^d} &= \frac{\beta^d - 1}{\beta^d}\\
% \Leftrightarrow \sum_{j = 1}^{d} \frac{a_{d-j}}{\beta^j} = 1\\
%\end{align*}
%which holds only if $a_j = a_{d-1-j}$ for $j = 0, \ldots, \lfloor (d-1)/2 - 1 \rfloor$.
\end{proof}

\begin{remark}
 Note that \eqref{case3} is another way to represent number systems defined by $\mathbf{a}''$ satisfying \eqref{case1} or \eqref{case2}. For example the numeration system defined by $\mathbf{a}^* = (1,0,1,1)$ is the same as the Fibonacci numeration system where $\mathbf{a} = (1,1)$. Furthermore if $\mathbf{a}$ is of the form \eqref{case4} we can rewrite it as the classical $a_0$-adic numeration system. Hence in the sequel we will only be interested in numeration systems which fulfill \eqref{case1} or \eqref{case2}. 
\end{remark}

\begin{lemma}\label{lem3}
 Let $G$ be a numeration system of the form \eqref{rec}, assume that the coefficients of the linear recurrence are given by $a_j = a, ~j = 0, \ldots, (d-1),$ for a positive integer $a$ and let $\beta$ denote the corresponding characteristic root. Then $\mu(Z) = \lambda(\phi_\beta(Z))$ for every cylinder set $Z$.
\end{lemma}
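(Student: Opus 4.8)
The plan is to show that the Monna image of every cylinder is, up to a $\lambda$-null set, a half-open interval, and that its length agrees with the explicit value of $\mu(Z)$ recorded in \eqref{mu}. Fix the cylinder $Z$ cut out by an admissible prefix $w=\epsilon_0\cdots\epsilon_{K-1}$. Since $\phi_\beta$ is monotone on regular (admissible) sequences with respect to the lexicographic order, $\phi_\beta(Z)$ is an interval whose left endpoint is the image of the least integer of $Z$, namely $L_w=\sum_{j=0}^{K-1}\epsilon_j\beta^{-j-1}=\phi_\beta\bigl(\sum_{j<K}\epsilon_jG_j\bigr)$. That there are no gaps follows from case \eqref{case1} of Lemma \ref{vdC}: for $\mathbf a=(a,\dots,a)$ the images of the depth-$K$ cylinders are pairwise disjoint and cover $[0,1)$ up to a countable set, so each is a genuine subinterval. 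Writing the right endpoint as $R_w=L_w+\beta^{-K}\sigma_w$, the task reduces to computing the supremal $\beta$-value $\sigma_w\in(0,1]$ of the admissible tails that may follow $w$.

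The number $\sigma_w$ depends on $w$ only through the finitely many trailing digits that can still interact, via \eqref{maximal}, with the leading digits of a continuation; for a recurrence of order $d$ this is a finite-state datum $s=s(w)$. I would set up the one-step recursion $\sigma_s=\max_{\eta}\bigl(\eta\beta^{-1}+\beta^{-1}\sigma_{s'(\eta)}\bigr)$, the maximum running over admissible next digits $\eta$ with resulting state $s'(\eta)$, and solve the resulting finite linear system using the defining relation \eqref{eins}, $\sum_{i=0}^{d-1}a_i\beta^{-i-1}=1$, together with the recurrence \eqref{rec}. In the Fibonacci case $d=2$, $a=1$ this already yields $\sigma=1$ when the last digit of $w$ is $0$ and $\sigma=\beta^{-1}$ when it is $1$, and one checks that $\beta^{-K}\sigma$ reproduces $(F_K\beta+F_{K+1}-F_K)/\beta^{K+2}$.

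The final step is to identify $\beta^{-K}\sigma_w$ with the right-hand side of \eqref{mu}. Here I would invoke the counting interpretation $F_{K+i}=\#\{n\in Z:\ n<G_{K+i}\}$: the integers of $Z$ below $G_{K+i}$ are precisely the continuations of $w$ by at most $i$ further digits, so the state value $\sigma_w$ can be rewritten as a combination of the powers $\beta^{-K-i}$ whose coefficients are exactly the differences $F_{K+i}-a\sum_{l<i}F_{K+l}$ occurring in \eqref{mu}; absorbing the geometric factor $\beta^K(\beta^{d-1}+\dots+1)=\beta^{K+d}/a$ then gives the asserted equality.

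The main obstacle is exactly this prefix--tail coupling: admissibility ties the last $d-1$ digits of $w$ to the first digits of its continuations, so neither $\sigma_w$ nor $\phi_\beta(Z)$ factors as a clean product, and turning the finite-state value into the count-based numerator of \eqref{mu} is the delicate bookkeeping. A cleaner route avoids this computation entirely: as $\phi_\beta$ is almost everywhere a measure-space bijection onto $[0,1)$, it conjugates the odometer $\tau$ to the $\beta$-adic von Neumann--Kakutani map $S$ on $[0,1)$, which by the interval structure above acts on the cylinder images by the carry-translation and is hence a piecewise isometry preserving $\lambda$. Then $\nu(\cdot):=\lambda(\phi_\beta(\cdot))$ is a $\tau$-invariant Borel probability measure, and the unique ergodicity of the $G$-odometer (\cite[Theorem 5]{glt}) forces $\nu=\mu$, so that $\mu(Z)=\lambda(\phi_\beta(Z))$ for every cylinder $Z$.
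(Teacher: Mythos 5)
Your last paragraph is the real proof attempt (the $\sigma_w$-recursion sketched before it is, by your own description, a plan rather than a completed computation), and its strategy is genuinely different from the paper's. The paper proves the lemma by brute force: it splits into cases according to how many trailing digits of the cylinder equal $a$, evaluates $\mu(Z)$ from \eqref{mu} by counting $F_K$, identifies $\phi_\beta(Z)$ with an explicit interval, and checks agreement using \eqref{alpha}. Your route --- set $\nu:=\lambda\circ\phi_\beta$, show $\nu$ is a $\tau$-invariant Borel probability measure because the conjugated map is an a.e.\ bijective piecewise translation, and conclude $\nu=\mu$ from unique ergodicity of the $G$-odometer --- would bypass all matching against \eqref{mu}, and it is not circular, since unique ergodicity of a single odometer is imported from \cite[Theorem 5]{glt}, exactly as the paper imports it.

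There is, however, a genuine gap, and it sits precisely at the step you claim to discharge via Lemma~\ref{vdC}. Everything in your final paragraph --- countable additivity of $\nu$, $\nu$ having total mass $1$, the conjugated map being an a.e.\ bijective piecewise translation and hence $\lambda$-preserving --- rests on the tiling statement: the depth-$K$ cylinder images are essentially disjoint intervals whose union is $[0,1)$ up to a $\lambda$-null set. Lemma~\ref{vdC} does not assert this; it only controls the range of the Monna map, namely $\phi_\beta(\mathbb{N})\subset[0,1)$ with supremum $1$. The covering half, in particular, is not a formal consequence: for all Lemma~\ref{vdC} says, $\phi_\beta(\mathcal{K}_G)$ could be a compact Lebesgue-null set, in which case $\nu$ is not a probability measure and the argument collapses (and if $\lambda(\phi_\beta(\mathcal{K}_G))=m<1$, invariance plus unique ergodicity would only yield $\lambda(\phi_\beta(Z))=m\,\mu(Z)$, not the asserted equality). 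Covering is equivalent to saying that almost every $x\in[0,1)$ has a $G$-admissible $\beta$-expansion, i.e.\ that the greedy (Parry-admissible) digit string of $x$ also satisfies the reversed condition \eqref{maximal}; this is true for constant coefficients $(a,\dots,a)$, essentially because ``$d$ consecutive digits equal to $a$'' is a reversal-invariant pattern, but it is a real step that must be proved, and it is exactly the prefix--tail coupling you yourself single out as ``the main obstacle'' before deferring it. (Even disjointness needs more than the lemma: in the Fibonacci case the depth-2 prefixes $(0,1)$ and $(1,0)$ have values $\beta^{-2}$ and $\beta^{-1}$, a gap of only $\beta^{-3}<\beta^{-2}$, and their images remain disjoint only because admissible tails after the digit $1$ must begin with $0$.) The paper supplies all of this content by exhibiting, case by case, the exact interval in which $\phi_\beta(Z)$ is dense and from which all other cylinders are excluded. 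Once that tiling fact is proved, your unique-ergodicity endgame does close the proof and is an attractive alternative to the paper's length-versus-\eqref{mu} bookkeeping; as written, though, the proposal has not avoided that work, only the easier half of it.
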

\begin{proof}
 Let the cylinder set $Z$ be defined by the fixed digits $\epsilon_0, \ldots, \epsilon_{k-1}$. Assume first that $\epsilon_{k-1} < a$, then $F_{k+r} = (a+1)^r$ for $0 \leq r < d$. Thus, by \eqref{mu}, we obtain that
\begin{equation*}
 \mu(Z) = \beta^{-k}.
\end{equation*}

Consider the $\beta$-adic Monna map at $n \in \mathbb{N}$, i.e.\
\begin{equation*}
 \phi_\beta(n) = \sum_{i = 0}^\infty \frac{\epsilon_i}{\beta^{i+1}}.
\end{equation*}
If $\epsilon_{k-1} < a$ we can easily see that $\phi_\beta(Z)$ is dense in 
\begin{equation*}
I = \left[\sum_{i = 0}^{k-1} \frac{\epsilon_i}{\beta^{i+1}}, \sum_{i = 0}^{k-2} \frac{\epsilon_i}{\beta^{i+1}} + \frac{(\epsilon_{k-1} + 1)}{\beta^{k}} \right)
\end{equation*}
and that $\phi_\beta(x') \notin I$ if $x' \notin Z$. Thus $\phi_\beta(Z)$ is $\lambda$-measurable and $\lambda(\phi_\beta(Z)) = \lambda(I) = \beta^{-k}$.\\

Assume now that $Z$ is defined by the fixed digits $\epsilon_{0},\ldots , \epsilon_{k-2}$ and $\epsilon_{k-1} = a$. By the above argument we derive that a cylinder with fixed digits $\epsilon_{0} ,\ldots, \epsilon_{k-2}$ has measure $\beta^{-(k-1)}$ and every cylinder with digits $\epsilon_{0} , \ldots , \epsilon_{k-1}$ has measure $\beta^{-k}$. Thus
\begin{equation*}
 \mu(Z) = \beta^{-(k-1)} - (a - 1) \beta^{-k}.
\end{equation*}
Now we consider $\phi_\beta(Z)$, hence
\begin{equation*}
\phi_\beta(Z) = \left[\sum_{i = 0}^{k-1} \frac{\epsilon_i}{\beta^{i+1}}, \sum_{i = 0}^{k-3} \frac{\epsilon_i}{\beta^{i+1}} + \frac{(\epsilon_{k-2} + 1)}{\beta^{k}} \right)
\end{equation*}
and thus $\lambda(\phi_\beta(Z)) = \mu(Z)$.\\ 

Let $2 \leq h \leq \min(k, d-1)$ and consider a cylinder set $Z$ with fixed digits $\epsilon_{0},\ldots , \epsilon_{k-h-1} < a$ and $\epsilon_{k-l}=a$ for $l = 1, \ldots, h$. Then, as above we get that the cylinder with fixed digits $\epsilon_{0},\ldots , \epsilon_{k-h-1}$ has measure $\beta^{-(k-h)}$ and every cylinder with digits $\epsilon_{0},\ldots , \epsilon_{k-h+1}$ has measure $\beta^{-(k-h+2)}$. Thus we get that
\begin{equation*}
 \mu(Z) = \beta^{-(k-h+1)} - (a - 1) \beta^{-(k-h+2)}
\end{equation*}
Considering $\phi_\beta(Z)$ we have
\begin{equation*}
\phi_\beta(Z) = \left[\sum_{i = 0}^{k-h+1} \frac{\epsilon_i}{\beta^{i+1}}, \sum_{i = 0}^{k-h} \frac{\epsilon_i}{\beta^{i+1}} + \frac{(\epsilon_{k-h-1} + 1)}{\beta^{k-h+2}} \right),
\end{equation*}
and thus $\lambda(\phi_\beta(Z)) = \mu(Z)$.
\end{proof}

\begin{remark}
 As mentioned in the previous section, a result of Frougny and Solomyak \cite[Lemma 3]{frougny} implies that the dominant root of
\begin{equation*}
 x^2 - a_0 x - a_1, \quad a_0,a_1 \geq 1,
\end{equation*}
is a Pisot number if and only if $a_0 \geq a_1$. By Lemma \ref{vdC} we know that the image of $\mathcal{K}_G^0$ under $\phi_\beta$ is not a subset of $[0,1)$, when $a_0 > a_1$. Thus Lemma \ref{lem3} characterizes all van der Corput-type constructions when $d = 2$.
\end{remark}

\begin{theorem}\label{main2}
 Let $G^1, \ldots, G^s$ be numeration systems as in Theorem \ref{main1} and let $\beta_1, \ldots, \beta_s$ denote the roots of the corresponding characteristic equations. Then the $s$-dimensional, $\boldsymbol \beta$-adic Halton sequence $(\phi_{\boldsymbol \beta}(n))_{n \in \mathbb{N}}$ is u.d.\ in $[0,1)^s$.
\end{theorem}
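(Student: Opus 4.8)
The plan is to combine the unique ergodicity of the product odometer established in Theorem \ref{main1} with the measure-transport property of the Monna map from Lemma \ref{lem3}, transferring uniform distribution on the adic product space to uniform distribution on $[0,1)^s$. The hypotheses imposed here are exactly those of Theorem \ref{main1}, so the product dynamical system $X := (\mathcal{K}_{G^1},\tau_1) \times \ldots \times (\mathcal{K}_{G^s},\tau_s)$ is uniquely ergodic, with unique invariant measure $\mu := \mu_1 \times \ldots \times \mu_s$. Since $X$ is compact metrizable and $T := \tau_1 \times \ldots \times \tau_s$ is uniquely ergodic, the remark following the definition of unique ergodicity guarantees that the orbit $(T^n \mathbf{0})_{n \in \mathbb{N}}$ is u.d.\ in $X$ with respect to $\mu$. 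By construction the $i$-th coordinate of $T^n \mathbf{0}$ is precisely the $G^i$-admissible representation of $n$, so applying the product Monna map coordinatewise returns the sequence of interest, namely $\phi_{\boldsymbol\beta}(T^n \mathbf{0}) = \phi_{\boldsymbol\beta}(n)$.

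Next I would transfer the measure to the unit cube. Each coordinate system $G^i$ has the equal-coefficient form $a_j^i = b_i$ required by Lemma \ref{lem3}, which yields $\mu_i(Z) = \lambda(\phi_{\beta_i}(Z))$ for every cylinder $Z$, the image $\phi_{\beta_i}(Z)$ being a half-open interval. Because the cylinders generate the Borel $\sigma$-algebra of $\mathcal{K}_{G^i}$ and map onto these intervals, the identity extends to show that $\phi_{\beta_i}$ pushes $\mu_i$ forward to $\lambda$ on $[0,1)$; taking products, $\phi_{\boldsymbol\beta}$ pushes $\mu$ forward to $\lambda_s$ on $[0,1)^s$.

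To conclude, I would verify the defining uniform distribution condition directly on boxes. For a box $[\mathbf{a},\mathbf{b}) \subseteq [0,1)^s$, its preimage under $\phi_{\boldsymbol\beta}$ can be approximated from inside and outside by finite unions of product cylinders, which are clopen in the adic topology and therefore have continuous indicator functions. Unique ergodicity then forces the Birkhoff averages of these indicators along $(T^n\mathbf{0})$ to converge to the $\mu$-measures of the approximating sets, and these pinch the target value $\lambda_s([\mathbf{a},\mathbf{b}))$ by the measure transport of the previous step. Equivalently, one may argue through Weyl's criterion by composing a continuous test function on $[0,1)^s$ with $\phi_{\boldsymbol\beta}$ and applying unique ergodicity to the resulting continuous function on $X$.

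I expect the main obstacle to be that $\phi_{\boldsymbol\beta}$ is not a homeomorphism: like the $b$-adic Monna map it is continuous and surjective but not injective, the non-regular representations (those whose digits are eventually maximal) being identified with their regular counterparts. To make the cylinder/box approximation rigorous I would argue that these exceptional sequences form a $\mu$-null set and that the boundaries of the boxes are $\lambda_s$-null, so that neither the Birkhoff averages nor the limiting integral are affected by the identifications occurring at the interval endpoints.
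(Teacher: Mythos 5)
Your proposal is correct and takes essentially the same route as the paper: unique ergodicity of the product odometer from Theorem \ref{main1}, the measure-transport property of the Monna map from Lemma \ref{lem3}, and identification of the Halton sequence with the image of the orbit of $\mathbf{0}$. The only difference is presentational: the paper compresses the transfer step into a claimed ``isometry'' between the product odometer and a product system $(T_1,\ldots,T_s)$ on $[0,1)^s$ followed by an appeal to Theorem \ref{B}, whereas you keep the dynamics on the adic space and justify the transfer by pushing the measure forward and sandwiching box indicators between clopen cylinder sets --- which in fact supplies the details (non-injectivity of $\phi_{\boldsymbol\beta}$, and why the conclusion holds for \emph{every} point rather than almost every point) that the paper's two-line argument glosses over.
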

\begin{proof}
  By Lemma \ref{lem3} and the definition of the Monna map we obtain an isometry between the dynamical systems $((\mathcal{K}_{G^1}, \tau_1) \times \ldots \times (\mathcal{K}_{G^s}, \tau_s))$ and $(([0,1), T_1) \times \ldots \times ([0,1), T_s))$ where
\begin{equation*}
 T_i \colon [0,1) \rightarrow [0,1), \quad T_i(x) := \phi_{\beta_i} \circ \tau_i \circ \phi_{\beta_i}^+ (x).
\end{equation*}
Let $\mathbf{T} \mathbf{x} = (T_1 x_1, \ldots, T_s x_s)$ for $\mathbf{x} = (x_1, \ldots, x_s) \in [0,1)^s$. Hence by Theorem \ref{B}, $(\mathbf{T}^n \mathbf{x})_{n \in \mathbb{N}}$ is u.d.\ in $[0,1)^s$ for all $\mathbf{x} \in [0,1)^s$. In particular $(\phi_{\boldsymbol \beta} (n))_{n \in \mathbb{N}} = (\mathbf{T}^n \mathbf{0})_{n \in \mathbb{N}}$ is u.d.
\end{proof}

\begin{remark}
 Note that the classical $b$-adic Halton sequence with pairwise coprime, integer bases $b_1, \ldots, b_s \geq 2$, is included in Theorem \ref{main2}.
\end{remark}

\begin{theorem}\label{101}
 Let the numeration system $G$ be defined by the coefficients $(a_0, a_1, a_2) = (1,0,1)$ and let $\beta$ be its characteristic root. Then $\mu(Z) = \lambda(\phi_{\beta}(Z))$ for all cylinder sets $Z$. Thus $T(x) = \phi_\beta \circ \tau \circ \phi_{\beta}^+ (x)$ is uniquely ergodic and $(T^n x)_{n \in \mathbb{N}}$ is u.d.\ for all $x$ in $[0,1)$. Furthermore the spectrum of $T$ is given by
\begin{equation}\label{spec}
  \Gamma = \left\{ \exp \left( 2 \pi i \frac{c}{\beta^l} \right) \colon m,l,c \in \mathbb{N} \cup \{0\} \right\}.
\end{equation}

\end{theorem}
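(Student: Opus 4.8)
The plan is to mirror the cylinder-measure computation of Lemma~\ref{lem3}, since $(1,0,1)$ falls outside the scope of Theorems~\ref{main1}--\ref{main2}: its coefficients violate the descent condition \eqref{descent} and are not constant, so neither the general Pisot machinery invoked there nor Lemma~\ref{lem3} applies verbatim. First I would collect the structural facts I am entitled to use. The recurrence is $G_{n+3} = G_{n+2} + G_n$, with $(G_n)_{n \ge 0} = (1,2,3,4,6,9,13,\dots)$, and the characteristic root $\beta$ is the real root of $x^3 = x^2 + 1$, a Pisot number (its conjugates lie strictly inside the unit disc). By Lemma~\ref{vdC}, since $(1,0,1)$ is of the form \eqref{case2}, the image $\phi_\beta(\mathbb{N})$ is contained and dense in $[0,1)$, so this is a genuine van der Corput-type system; the odometer $(\mathcal{K}_G, \tau)$ is uniquely ergodic with invariant measure $\mu$ given by \eqref{mu} (by \cite[Theorem 5]{glt}); and $\beta$ satisfies Hypothesis~\ref{hypB} (it is a cubic Pisot unit with the finiteness property, cf.\ \cite{aki}), so its spectrum is described by \eqref{Gamma}.

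The core of the proof is the cylinder identity $\mu(Z) = \lambda(\phi_\beta(Z))$. I would argue as in Lemma~\ref{lem3}, fixing a cylinder $Z$ with prescribed digits $\epsilon_0, \dots, \epsilon_{k-1}$ and splitting into cases according to its trailing digits. In each case the counting function $F_K = \#\{n < G_K : n \in Z\}$ is evaluated by enumerating the completions admissible under \eqref{eq1}, and then substituted into \eqref{mu} to express $\mu(Z)$ as a combination of powers of $\beta$. Independently, $\phi_\beta(Z)$ is identified (up to a $\lambda$-null set) as a half-open interval $I$, with $\phi_\beta(x') \notin I$ for $x' \notin Z$, so that $\lambda(\phi_\beta(Z)) = \lambda(I)$ is simply the length of $I$. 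The relation \eqref{eins}, here $\tfrac{1}{\beta} + \tfrac{1}{\beta^3} = 1$ (equivalently $\beta^3 = \beta^2 + 1$), is what forces the combinatorial value of $\mu(Z)$ and the geometric length $\lambda(I)$ to agree in every case.

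Granting the cylinder identity, $\phi_\beta$ transports $\mu$ to $\lambda$ on the algebra of cylinders, and hence by a standard extension argument on all Borel sets, so $\phi_\beta$ is a measure isomorphism intertwining $\tau$ with $T = \phi_\beta \circ \tau \circ \phi_\beta^{+}$. Since $\tau$ is uniquely ergodic, every $T$-invariant Borel probability measure pulls back to a $\tau$-invariant one, which must be $\mu$, so $\lambda$ is the unique $T$-invariant measure; consequently $(\tau^n z)_n$ is u.d.\ in $\mathcal{K}_G$ for every $z$, and applying $\phi_\beta$ gives that $(T^n x)_n = (\phi_\beta(\tau^n \phi_\beta^{+}(x)))_n$ is u.d.\ in $[0,1)$ for every $x$, using the remark after the definition of unique ergodicity together with the surjectivity of $\phi_\beta$. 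Finally, the spectrum of $T$ coincides with that of $\tau$, namely $\Gamma$ in \eqref{Gamma}; to make it explicit I would reuse the estimate from the proof of Theorem~\ref{main1}, namely $\exp(2\pi i\, G_n / \beta^l) \approx \exp(2\pi i\, C \beta^{n-l}) \approx \exp(2\pi i\, G_{n-l}) \to 1$ via \eqref{approx}, which shows $\exp(2\pi i\, c / \beta^l) \in \Gamma$ for all $c,l \in \mathbb{N} \cup \{0\}$ and that these values exhaust $\Gamma$; because $a_0 = 1$, no further factor of the form $b^m$ survives, yielding \eqref{spec}.

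The main obstacle is the case analysis in the second paragraph. Because $(1,0,1)$ is not a constant-coefficient system, the clean dichotomy of Lemma~\ref{lem3} (trailing digit $< a$ versus $= a$) is replaced by a finer bookkeeping dictated by the genuine forbidden patterns of the $(1,0,1)$ numeration system. The delicate points are to enumerate the admissible completions correctly so that each $F_K$ is exact, and to verify that the intervals $\phi_\beta(Z)$ attached to distinct cylinders are essentially disjoint, so that matching $\mu(Z)$ with $\lambda(I)$ case by case does establish the measure-preserving property globally rather than merely on individual cylinders.
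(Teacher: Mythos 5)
Your peripheral reductions are all sound and coincide with the paper's own proof: the paper likewise disposes of the hypotheses by citation (Hypothesis~\ref{hypA} via \cite{bks}, Hypothesis~\ref{hypB} via \cite{aki}; either one suffices for the spectral description), obtains \eqref{spec} ``by the proof of Theorem~\ref{main1}'' exactly as you propose, notes that $a_0=1$ kills the $b^m$ factor, and transfers unique ergodicity and equidistribution through the Monna map once the measure identity is known. The gap is that your proposal stops precisely where the paper's proof begins: the case analysis you defer as ``the main obstacle'' is not a technical afterthought but the entire content of the theorem, and you never carry it out. You do not determine the admissible digit patterns of the $(1,0,1)$-system, you compute no value of $F_K$, and you verify no instance of the claimed matching; the assertion that $\beta^{-1}+\beta^{-3}=1$ ``forces'' agreement in every case is a prediction about how the computation will turn out, not a demonstration. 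As it stands this is a correct plan, not a proof.

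To make the missing piece concrete: admissibility under \eqref{eq1} for this system means that any two digits $1$ are separated by at least two zeros, since the patterns $11$ and $101$ give $G_{j+1}+G_j \geq G_{j+2}$ and $G_{j+2}+G_j = G_{j+3}$, both violating the strict inequality in \eqref{eq1}. (A caution exactly at the ``delicate point'' you flag: the lexicographic condition \eqref{maximal}, read literally with comparison word $(1,0,1)^\infty$, would wrongly admit $101$; the correct comparison word is the quasi-greedy expansion $(1,0,0)^\infty$ of $1$. Your instinct to work from \eqref{eq1} rather than \eqref{maximal} is the right one, but it has to be used, not just named.) Consequently, for $k \geq 3$ a cylinder $Z$ fixing $\epsilon_0,\ldots,\epsilon_{k-1}$ falls into exactly four cases according to $(\epsilon_{k-3},\epsilon_{k-2},\epsilon_{k-1})$, namely $(0,0,0)$, $(1,0,0)$, $(\ast,1,0)$, $(\ast,\ast,1)$, with counts $(F_k,F_{k+1},F_{k+2})$ equal to $(1,2,3)$, $(1,2,3)$, $(1,1,2)$, $(1,1,1)$ respectively; formula \eqref{mu} with $d=3$, $(a_0,a_1)=(1,0)$ then yields $\mu(Z)=\beta^{-k}$, $\beta^{-k}$, $\beta^{-k}\frac{\beta^{-2}+1}{\beta^{-2}+\beta^{-1}+1}$, $\beta^{-k}\frac{1}{\beta^{-2}+\beta^{-1}+1}$. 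On the geometric side the maximal admissible tail is $(1,0,0)^\infty$, and $\sum_{i\geq 0}\beta^{-(3i+1)}=1$ identifies $\phi_\beta(Z)$ as an interval of length $\beta^{-k}$, $\beta^{-k}$, $\beta^{-(k+1)}$, $\beta^{-(k+2)}$ in the four cases (in the last two, the leading one, respectively two, digits above position $k-1$ are forced to be $0$). The matching in the third and fourth cases reduces to $\beta^{-1}+\beta^{-2}+\beta^{-3} = 1+\beta^{-2}$ and $\beta^{-2}+\beta^{-3}+\beta^{-4}=1$, both equivalent to the characteristic equation \eqref{alpha}; together with the cases $k<3$ this is the paper's proof, and it is exactly the part your proposal leaves undone.
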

\begin{proof}
Note that $\beta$ is a Pisot number and equation \eqref{par} holds since $\lfloor \beta \rfloor = 1 = a_0$. Hypothesis \ref{hypA} was proved for this case in \cite[Theorem 4]{bks}. The proof that Hypothesis \ref{hypB} is fulfilled can be found in \cite[Theorem 3]{aki}. Equation \eqref{spec} follows by the proof of Theorem \ref{main1}.\\

Now we have to prove that $\phi_\beta$ transports the measure $\mu$ to the Lebesgue measure on $[0,1)$. First we assume $k \geq 3$. Let the cylinder $Z$ be defined by the fixed digits $\epsilon_0, \ldots, \epsilon_{k - 1}$. We consider four different cases, first $\epsilon_{k-3} = \epsilon_{k-2} = \epsilon_{k-1} = 0$. Then $F_{k} = 1, F_{k+1} = 2, F_{k + 2} = 3$ and we get by \eqref{mu} that
\begin{equation*}
 \mu(Z) = \beta^{-k}.
\end{equation*}
Furthermore by the same argument as in the first part of the proof of Theorem \ref{main2} we obtain
\begin{equation*}
 \phi_\beta(Z) = \left[ \sum_{i = 0}^{k - 1} \frac{\epsilon_i}{\beta^{i+1}}, \sum_{i = 0}^{k - 2} \frac{\epsilon_i}{\beta^{i+1}} + \frac{(\epsilon_{k - 1} + 1)}{\beta^k} \right)
\end{equation*}
and thus $\lambda(\phi_\beta(Z)) = \beta^{-k}$.\\
Now let $\epsilon_{k-3} = 1, \epsilon_{k-2} = \epsilon_{k-1} = 0$. Hence $F_{k} = 1, F_{k+1} = 2, F_{k + 2} = 3$ and $\mu(Z) = \beta^{-k}$. We have
\begin{align*}
 \phi_\beta(Z) &= \left[\sum_{i = 0}^{k - 1} \frac{\epsilon_i}{\beta^{i+1}}, \sum_{i = 0}^{k - 1} \frac{\epsilon_i}{\beta^{i+1}} + \beta^{-k} \sum_{i = 0}^\infty \beta^{-(3 i+1)} \right)\\
  &= \left[\sum_{i = 0}^{k - 1} \frac{\epsilon_i}{\beta^{i+1}}, \sum_{i = 0}^{k - 1} \frac{\epsilon_i}{\beta^{i+1}} + \beta^{-k} \right)
\end{align*}
thus we have again $\lambda(\phi_\beta(Z)) = \beta^{-k}$. Now assume $\epsilon_{k-2} = 1, \epsilon_{k-1} = 0$. Hence $F_{k} = 1, F_{k+1} = 1, F_{k + 2} = 2$ and
\begin{equation*}
 \mu(Z) = \beta^{-k} \frac{\beta^{-2} + 1}{\beta^{-2} + \beta^{-1} + 1}.
\end{equation*}
Similarly as above we get
\begin{align*}
 \phi_\beta(Z) &= \left[ \sum_{i = 0}^{k - 1} \frac{\epsilon_i}{ \beta^{i+1}}, \sum_{i = 0}^{k - 1} \frac{\epsilon_i}{\beta^{i+1}} + \beta^{-(k+1)} \sum_{i = 0}^\infty \beta^{-(3 i+1)} \right)\\
  &= \left[\sum_{i = 0}^{k - 1} \frac{\epsilon_i}{\beta^{i+1}}, \sum_{i = 0}^{k - 1} \frac{\epsilon_i}{\beta^{i+1}} + \beta^{-(k+1)} \right)
\end{align*}
thus $\lambda(\phi_\beta(Z)) = \beta^{-(k+1)}$. Now we obtain
\begin{align*}
 \beta^{-(k+1)} &= \beta^{-k} \frac{\beta^{-2} + 1}{\beta^{-2} + \beta^{-1} + 1}\\
 \Leftrightarrow \beta^{-3} + \beta^{-2} + \beta^{-1} &= \beta^{-2} + 1
\end{align*}
which holds by \eqref{alpha}.\\
In the last case we assume $\epsilon_{k - 1} = 1$, thus  $F_{k} = F_{k+1} = F_{k + 2} = 1$ and
\begin{equation*}
 \mu(Z) = \beta^{-k} \frac{1}{\beta^{-2} + \beta^{-1} + 1}.
\end{equation*}
As above we get $\lambda(\phi_\beta(Z)) = \beta^{-(k+2)}$ and the result follows by
\begin{align*}
 \beta^{-(k+2)} &= \beta^{-k} \frac{1}{\beta^{-2} + \beta^{-1} + 1}\\
 \Leftrightarrow \beta^{-4} + \beta^{-3} + \beta^{-2} &= 1\\
 \Leftrightarrow \beta^{-3} + \beta^{-1} &= 1.
\end{align*}
The cases, where $k < 3$, follow by the same arguments.
\end{proof}

\begin{remark}
 As a consequence of Theorem \ref{product} we can construct two-dimensional u.d.\ sequences $(\phi_{\beta_1}(n), \phi_{\beta_2}(n))_{n \in \mathbb{N}}$, where $\beta_1$ is the characteristic root in Theorem \ref{101}, $(\phi_{\beta_2}(n))_{n \in \mathbb{N}}$ is the characteristic root of a numeration system in Theorem \ref{main2} and $\frac{\beta^k_1}{\beta^l_2} \notin \mathbb{Q}$ for all integers $k,l > 0$. In this way we can construct a new class of multidimensional u.d.\ sequences.\\
 Note that Theorem \ref{101} extends the examples given in \cite[Proposition 13,14]{bg}, where the authors consider $G$-additive functions which lead to u.d.\ point sequences in the unit interval.\\ 
 Furthermore, it is possible to show that the one-dimensional point sequence in the previous theorem is a low-discrepancy sequence by mimicking the proof for the $b$-adic van der Corput sequence, see e.g.\ \cite{kn, bg, carbone}.
\end{remark}

In \cite{carbone3}, the authors present the so-called Kakutani-Fibonacci-transformation for which they show that it is an ergodic transformation on the unit interval and the orbit of 0 is exactly the LS-sequence with parameters $L = S = 1$. With our approach we can show that this transformation is in fact uniquely ergodic, i.e.\ the orbit of $x$ under the transformation is u.d.\ for every $x \in [0,1)$.

\begin{theorem}\label{th3}
 The Kakutani-Fibonacci-transformation is uniquely ergodic.
\end{theorem}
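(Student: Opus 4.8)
The plan is to show that the Kakutani-Fibonacci transformation is, up to the change of variables furnished by the Monna map, nothing but the odometer on the Fibonacci numeration system, and then to transfer unique ergodicity across this identification exactly as in the proof of Theorem \ref{101}. Concretely, I would work with the numeration system $G$ whose defining coefficients are $a_0 = a_1 = 1$, so that \eqref{alpha} reads $x^2 = x + 1$ and the characteristic root $\beta = (1+\sqrt5)/2$ is the golden ratio (a Pisot number), while the $G_k$ are Fibonacci numbers and admissibility is the Zeckendorf condition.

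First I would recall from \cite{carbone3} that the orbit of $0$ under the Kakutani-Fibonacci transformation is the LS-sequence with parameters $L = S = 1$, and that by Carbone \cite{carbone} this LS-sequence coincides with the $\beta$-adic van der Corput sequence $(\phi_\beta(n))_{n \in \mathbb{N}}$ attached to the Fibonacci system $G$ above. The first genuine step is to upgrade this agreement of orbits into an agreement of transformations. I would argue that both the Kakutani-Fibonacci map $T_{KF}$ and the conjugated odometer $T_\beta := \phi_\beta \circ \tau \circ \phi_\beta^+$ are piecewise translations whose pieces are exactly the images $\phi_\beta(Z)$ of the cylinder sets $Z$ of $G$, and that on the dense set $\{\phi_\beta(n) : n \in \mathbb{N}_0\}$ both act by the add-one rule $\phi_\beta(n) \mapsto \phi_\beta(n+1)$. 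Since two piecewise translations with the same breakpoints that agree on a dense set must coincide, this yields $T_{KF} = T_\beta$ everywhere on $[0,1)$.

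Next I would invoke Lemma \ref{lem3} with $a = 1$ and $d = 2$: because the coefficients of the Fibonacci recurrence are constant, $\mu(Z) = \lambda(\phi_\beta(Z))$ for every cylinder $Z$, so the Monna map $\phi_\beta$ transports the unique invariant measure $\mu$ of the odometer, given by \eqref{mu}, onto the Lebesgue measure $\lambda$ on $[0,1)$. This exhibits $([0,1), T_\beta, \lambda)$ as an isomorphic copy of the uniquely ergodic odometer $(\mathcal{K}_G, \tau, \mu)$, whose unique ergodicity is the content of \cite[Theorem 5]{glt}. As in the closing argument of Theorem \ref{101}, any $T_\beta$-invariant Borel probability measure pulls back under $\phi_\beta^+$ to a $\tau$-invariant measure on $\mathcal{K}_G$, which must equal $\mu$; pushing forward, the only $T_\beta$-invariant probability measure is $\lambda$. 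Hence $T_\beta$, and with it the Kakutani-Fibonacci transformation, is uniquely ergodic.

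The main obstacle is the identification step $T_{KF} = T_\beta$. The Kakutani-Fibonacci transformation is defined geometrically through Kakutani's splitting procedure, whereas $T_\beta$ is defined symbolically through the odometer, so matching them requires verifying that the successive refinements produced by the splitting procedure are precisely the cylinder partitions of $G$ and that the induced exchange of the refined pieces realises the add-one map in the $G$-expansion. Once this combinatorial correspondence is established, the measure-theoretic transport from Lemma \ref{lem3} and the unique ergodicity of the odometer make the remaining conclusions routine.
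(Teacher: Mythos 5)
Your proposal follows essentially the same route as the paper: identify the Kakutani-Fibonacci map with the Monna-conjugated Fibonacci odometer $\phi_\beta \circ \tau \circ \phi_\beta^+$ and transfer unique ergodicity from the odometer via the measure transport of Lemma \ref{lem3} --- which is precisely the content of Theorem \ref{main2} that the paper invokes. The only difference is one of detail: where the paper merely observes that the two piecewise translation maps coincide, you sketch an explicit argument (matching breakpoints given by images of cylinder sets, plus agreement on the dense orbit of $0$), which is a legitimate filling-in of that observation rather than a different approach.
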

\begin{proof}
 In \cite{ahz}, the authors use a van der Corput-type construction to compute the points of the LS-sequence with general parameters $L,S$. One can easily see that this construction is equivalent to the construction presented in the present paper, when $L = S = 1$, $G_{n + 2} = G_{n + 1} + G_n$, $G_0 = 1, G_1 = 2$ and $\beta$ is the golden ratio. Thus, by Theorem \ref{main2}, we get a uniquely ergodic transformation $T \colon [0,1) \rightarrow [0,1)$ given by $T(x) := \phi_\beta \circ \tau \circ \phi_\beta^+(x)$. Similar as for the classical van Neumann-Kakutani map, see \cite{ghl}, we observe that $T$ is exactly the piecewise translation map given in \cite{carbone3}. 
\end{proof}

\begin{remark}
 Note that, since $T$ is uniquely ergodic, the orbit of $x$ is u.d.\ for every $x \in [0,1)$. This fact is used in so-called randomized QMC techniques, where the starting point $X$ is a uniformly distributed random variable in $[0,1)$. This idea can of course be extended to the $s$-dimensional case. For more information on this topic and applications in financial mathematics see e.g.\ \cite{okt}.
\end{remark}

\bibliography{LSbib}

\begin{thebibliography}{10}

\bibitem{ahz}
C.~Aistleitner, M.~Hofer, and V.~Ziegler.
\newblock On the uniform distribution modulo 1 of multidimensional
  ls-sequences.
\newblock {\em to appear: Annali di Mathematica Pura et Applicata}, 2013.

\bibitem{aki}
S.~Akiyama.
\newblock Cubic pisot numbers with finite beta expansions.
\newblock {\em In F. Halter-Koche and R. Tichy (eds.): Algebraic Number Theory
  and Diophantine Analysis}, page~11.

\bibitem{aki2}
S.~Akiyama, H.~Brunotte, A.~Peth\"o, and J.~M. Thuswaldner.
\newblock Generalized radix representations and dynamical systems ii.
\newblock {\em Acta Arithmetica}, 121.

\bibitem{asal}
S.~Asmussen and H.~Albrecher.
\newblock {\em Ruin Probabilities, 2nd eds.}
\newblock World Scientific, Singapore, 2010.

\bibitem{bg}
G.~Barat and P.~Grabner.
\newblock Distribution properties of g-additive functions.
\newblock {\em Journal of Number Theory}, 60:103--123, 1996.

\bibitem{brauer}
A.~Brauer.
\newblock On algebraic equations with all but one root in the interior of the
  unit circle.
\newblock {\em Mathematische Nachrichten}, 4:250--257, 1951.

\bibitem{bks}
H.~Bruin, G.~Keller, and M.~Pierre.
\newblock Adding machines and wild attractors.
\newblock {\em Ergodic Theory and Dynamical Systems}, 17(6):1267--1287, 1997.

\bibitem{carbone}
I.~Carbone.
\newblock Discrepancy of {LS}-sequences of partitions and points.
\newblock {\em Ann. Mat. Pura Appl. (4)}, 191(4):819--844, 2012.

\bibitem{carbone3}
I.~Carbone, M.~R. Iac\`{o}, and A.~Vol\v{c}i\v{c}.
\newblock {A dynamical system approach to the Kakutani-Fibonacci sequence}.
\newblock {\em to appear: Ergodic Theory and Dynamical Systems}, 2012.

\bibitem{drmota}
M.~Drmota and M.~Infusino.
\newblock On the discrepancy of some generalized {K}akutani's sequences of
  partitions.
\newblock {\em Uniform Distribution Theory}, 7:75--104, 2012.

\bibitem{Fraenkel}
A.~S. Fraenkel.
\newblock Systems of numeration.
\newblock {\em Amer. Math. Monthly}, 92(2):105--114, 1985.

\bibitem{frougny}
C.~Frougny and B.~Solomyak.
\newblock Finite beta-expansions.
\newblock {\em Ergodic Theory and Dynamical Systems}, 12:713--723, 1992.

\bibitem{ghl}
P.~Grabner, P.~Hellekalek, and P.~Liardet.
\newblock The dynamical point of view of low-discrepancy sequences.
\newblock {\em Uniform Distribution Theory}, 7(1):11--70, 2012.

\bibitem{glt}
P.~Grabner, P.~Liardet, and R.~Tichy.
\newblock Odometers and systems of numeration.
\newblock {\em Acta Arithmetica}, 70:103--123, 1995.

\bibitem{gt}
P.~Grabner and R.~Tichy.
\newblock Contributions to digit expansions with respect to linear recurring
  sequences.
\newblock {\em Journal of Number Theory}, 36:160--169, 1990.

\bibitem{gt2}
P.~Grabner and R.~Tichy.
\newblock $\alpha$-expansions, linear recurrences, and the sum-of-digits
  function.
\newblock {\em Manuscripta Mathematica}, 70:311--324, 1991.

\bibitem{Halton}
J.~H. Halton.
\newblock On the efficiency of certain quasi-random sequences of points in
  evaluating multi-dimensional integrals.
\newblock {\em Numer. Math.}, 2:84--90, 1960.

\bibitem{hlawka}
E.~Hlawka.
\newblock {Funktionen von beschr\"ankter Variation in der Theorie der
  Gleichverteilung}.
\newblock {\em Annali di Mathematica Pura et Applicata}, (54):325--333, 1961.

\bibitem{holl}
M.~Hollander.
\newblock {\em Linear numeration systems, finite beta-expansions, and discrete
  spectrum of substitution dynamical systems}.
\newblock PhD Thesis, Washington University, 1996.

\bibitem{im}
Y.~Ichikawa and M.~Mori.
\newblock {Discrepancy of can der Corput sequences generated by piecewise
  linear transformations}.
\newblock {\em Monte Carlo Methods and Applications}, 10(2):107--116, 2004.

\bibitem{kn}
L.~Kuipers and H.~Niederreiter.
\newblock {\em Uniform distribution of sequences}.
\newblock Wiley-Interscience [John Wiley \& Sons], New York, 1974.

\bibitem{mori}
M.~Mori and M.Mori.
\newblock {Dynamical systems generated by algebraic method and low discrepancy
  sequences}.
\newblock {\em Monte Carlo Methods and Applications}, 18:327--351, 2012.

\bibitem{ninomiya}
S.~Ninomiya.
\newblock {Constructing a new class of low-discrepancy sequences by using the
  $\beta$-adic transformation}.
\newblock {\em Math.\ Comput.\ Simulation}, 47(2).

\bibitem{okt}
G.~{\"O}kten, B.~Tuffin, and V.~Burago.
\newblock {A central limit theorem and improved error bounds for a hybrid-Monte
  Carlo sequence with applications in computational finance}.
\newblock {\em Journal of Complexity}, 22(4):435--458, 2006.

\bibitem{parry}
W.~Parry.
\newblock {On the $\beta$-expansions of real numbers}.
\newblock {\em Acta Math.\ Acad.\ Sci.\ Hungar.}, 11:401--416, 1960.

\bibitem{solomyak}
B.~Solomyak.
\newblock Substitutions, adic transformations and beta-expansions.
\newblock {\em in: Contemporary Mathematics (Proc.\ of the Conference on
  Symbolic Dynamics, Yale, 1991)}, 135:361--372, 1992.

\bibitem{steiner1}
W.~Steiner.
\newblock Regularities of the distribution of $\beta$-adic van der corput
  sequences.
\newblock {\em Monatshefte Mathematik}, 149(1):67--81, 2006.

\bibitem{steiner2}
W.~Steiner.
\newblock Regularities of the distribution of abstract van der corput
  sequences.
\newblock {\em Uniform Distribution Theory}, 4(2):81--100, 2009.

\bibitem{Walters}
P.~Walters.
\newblock {\em An introduction to ergodic theory}, volume~79 of {\em Graduate
  Texts in Mathematics}.
\newblock Springer-Verlag, New York, 1982.

\bibitem{weyl}
H.~Weyl.
\newblock {\"Uber die Gleichverteilung von Zahlen mod. Eins}.
\newblock {\em Mathematische Annalen}, 77:313--352, 1916.

\end{thebibliography}
\bibliographystyle{abbrv}

\end{document}